\theoremstyle{plain}      
\newtheorem{thm}{Theorem}[section]     
\newtheorem{theorem}[thm]{Theorem}     
\newtheorem{corollary}[thm]{Corollary}     
\newtheorem{lemma}[thm]{Lemma}     
\newtheorem{proposition}[thm]{Proposition}
\theoremstyle{definition}      
\newtheorem{definition}[thm]{Definition}  
\newtheorem{example}[thm]{Example} 
\newtheorem{remark}[thm]{Remark}
\DeclareMathAlphabet{\doba}{U}{msb}{m}{n}         
\gdef\mC{\doba{C}}
\gdef\mN{\doba{N}}
\gdef\mR{\doba{R}}
\gdef\mZ{\doba{Z}}
\def\cC{\mathcal{C}}
\def\cL{\mathcal{L}}
\def\di{{\rm d}}
\let\ti\tilde
\let\witi\widetilde
\newcommand{\definedas}{\mathrel{\raise.095ex\hbox{\rm :}\mkern-5.2mu=}}
\let\ol\overline
\title{On toric locally conformally K\"ahler manifolds}
\author{Farid Madani, Andrei Moroianu, Mihaela Pilca}
\address{Farid Madani\\Institut f\"ur Mathematik\\
Goethe Universit\"at Frankfurt\\Robert-Mayer-Str. 10, 60325 Frankfurt am Main, Germany}
\email{madani@math.uni-frankfurt.de}
\address{Andrei Moroianu \\ Laboratoire de Math\'ematiques de Versailles, UVSQ, CNRS, Universit\'e Paris-Saclay, 78035 Versailles, France }
\email{andrei.moroianu@math.cnrs.fr}
\address{Mihaela Pilca\\Fakult\"at f\"ur Mathematik\\
Universit\"at Regensburg\\Universit\"atsstr. 31 
D-93040 Regensburg, Germany
\emph{and} 
Institute of Mathematics ``Simion Stoilow" of the Romanian Academy, 
21, Calea Grivitei Str.
010702-Bucharest, Romania}
\email{mihaela.pilca@mathematik.uni-regensburg.de}
\subjclass[2010]{53A30, 53B35, 53C25, 53C29, 53C55}
\keywords{locally conformally K\"ahler structure,  Hopf surface, toric manifold, twisted Hamiltonian}
\begin{document}

\begin{abstract}
We study compact toric strict locally conformally K\"ahler manifolds. We show that the Kodaira dimension of the underlying complex manifold is $-\infty$ and that the only compact complex surfaces admitting toric strict locally conformally K\"ahler metrics are the diagonal Hopf surfaces. We also show that every toric Vaisman manifold has lcK rank 1 and is isomorphic to the mapping torus of an automorphism of a toric compact Sasakian manifold.
\end{abstract}
\maketitle

\section{Introduction}

When searching for the {\it best} Hermitian metrics on a compact complex manifold $(M,J)$, one is naturally led to consider K\"ahler metrics. However, there are well-known topological obstructions which severely restrict the class of compact complex manifolds carrying such metrics. 
A more general class of compatible metrics, which was introduced by I.~Vaisman in the 80's, are the {\it locally conformally K\"ahler (lcK)} metrics. These are characterized by the condition that around any point in $M$, the metric $g$ can be conformally rescaled to a K\"ahler metric. If this metric can be globally defined, the structure is globally conformally K\"ahler~(gcK), otherwise it is called {\it strict} lcK. The topological obstructions imposed by the existence of lcK metrics are less restrictive than in the K\"ahler case. In complex dimension $2$, for instance, it was widely believed that any compact complex surface admits an lcK metric, until F. Belgun \cite{b} proved that some Inoue surfaces do not admit any lcK metric.

In this paper we investigate compact strict lcK manifolds admitting an effective torus action of maximal dimension by twisted Hamiltonian biholomorphisms. Such structures are called toric lcK and were introduced in \cite{p}. 

The paper is organized as follows: after some preliminaries on lcK manifolds, we study the properties of their automorphism group in Section~\ref{autom} and give several general results about lifts of group actions on lcK manifolds in Section~\ref{group}. 

In Section \ref{toricVaisman} we use the special decomposition of the space of harmonic 1-forms on compact Vaisman manifolds to show that every toric Vaisman manifold has first Betti number $\mathrm{b}_1=1$ and is thus isomorphic to the mapping torus of an automorphism of a toric compact Sasakian manifold.

In Section~\ref{seckod} we show that the Kodaira dimension of every compact complex manifold admitting a toric strict lcK structure is $-\infty$ (see Theorem~\ref{kod}). 

Using this result and the Kodaira classification of non-K\"ahlerian compact complex surfaces, we show in Theorem~\ref{mainthm} that the only compact complex surfaces with a toric strict lcK metric are the {\it diagonal Hopf surfaces}.  Indeed, lcK metrics on these manifolds have been constructed by P.~Gauduchon and L.~Ornea, \cite{go}, and they turn out to admit toric $T^2$-actions. As a corollary, every toric strict lcK surface carries a toric Vaisman structure.

{\bf Acknowledgments.} This work was supported by the Procope Project No.~32977YJ and by the SFB~1085. We thank Nicolina Istrati for pointing out to us an error in a preliminary version of the paper and for useful suggestions.

\section{Preliminaries on lcK manifolds}

Let $(M^{2n},J)$ be a (connected) complex manifold of complex dimension $n\ge 2$. A K\"ahler structure on $(M,J)$ is a Riemannian metric $g$ compatible with $J$ (in the sense that $J$ is skew-symmetric with respect to $g$), and such that the 2-form $\Omega:=g(J\cdot,\cdot)$ is closed. 

Two K\"ahler structures $g$ and $g'$ on $(M,J)$ in the same conformal class are necessarily homothetic. Indeed, if $g'=fg$ for some positive function $f$, the associated 2-forms are related by $\Omega'=f\Omega$, thus $0=\di\Omega'=\di f\wedge\Omega+f\di\Omega=\di f\wedge\Omega$, which shows that $\di f=0$, since the wedge product with the non-degenerate form $\Omega$ is injective on 1-forms.

A locally conformally K\"ahler (lcK) structure on $(M,J)$ is a Riemannian metric $g$ which is locally conformally K\"ahler in the sense that there exists an open covering $\{U_\alpha\}_{\alpha\in \mathcal{A}}$ of $M$ and smooth maps $\varphi_\alpha\in C^\infty(U_\alpha)$ such that $(U_\alpha,J,e^{-\varphi_\alpha}g)$ is K\"ahler. This definition is clearly independent on the metric $g$ in its conformal class $[g]$, so one usually refers to $(J,[g])$ as being an lcK structure.

Since $e^{-\varphi_\alpha}g$ and $e^{-\varphi_\beta}g$ are K\"ahler metrics in the same conformal class on $U_\alpha\cap U_\beta$, we deduce from the above remark that $\varphi_\alpha-\varphi_\beta$ is locally constant, and thus $\di\varphi_\alpha=\di\varphi_\beta$ on $U_\alpha\cap U_\beta$. This shows the existence of a closed 1-form $\theta$, called the {\it Lee form} of the lcK structure $(J,g)$, such that $\theta|_{U_\alpha}=\di\varphi_\alpha$. If $\Omega:=g(J\cdot,\cdot)$ denotes like before the 2-form associated to $g$ and $J$, then by definition $e^{-\varphi_\alpha}\Omega$ is a closed form on $U_\alpha$, thus 
$$0=\di(e^{-\varphi_\alpha}\Omega)=e^{-\varphi_\alpha}(-\di\varphi_\alpha\wedge\Omega+\di\Omega)=e^{-\varphi_\alpha}(-\theta\wedge\Omega+\di\Omega),$$
showing that $\di\Omega=\theta\wedge\Omega$ everywhere on $M$. If the lcK metric is changed by a conformal factor, $g':=e^{-\varphi} g$, the corresponding Lee form satisfies $\theta'=\theta-\di\varphi$.

If the Lee form vanishes, $g$ is K\"ahler, if it is exact, $\theta=\di\varphi$, then $g$ is globally conformal to the K\"ahler metric $g'=e^{-\varphi}g$, and if it is parallel, $g$ is called {\it Vaisman}. In this paper we will always assume that the lcK structure is {\em strict}, in the sense that its Lee form is not exact. This definition clearly does not depend on the choice of the metric $g$ in its conformal class.

Let now $\widetilde M$ be the universal covering of an lcK manifold $(M,J,g,\theta)$, endowed with the pull-back lcK structure $(\tilde J,\tilde g,\tilde \theta)$. Since $\widetilde M$ is simply connected, $\ti\theta$ is exact, {\em i.e.} $\tilde\theta=\di\varphi$, and by the above considerations, the metric $g^K:=e^{-\varphi}\tilde g$ is K\"ahler. The group $\pi_1(M)$ acts on $(\widetilde M,\tilde J,g^K)$ by holomorphic homotheties. Indeed, since $\tilde\theta$ is $\pi_1(M)$-invariant, there exists a group morphism $\rho\colon\pi_1(M)\to (\mathbb{R},+)$, $\gamma\mapsto c_\gamma$, such that
\begin{equation}\label{rho}
\gamma^*\varphi=\varphi+c_\gamma, \quad \forall \gamma\in \pi_1(M).
\end{equation}
 We thus have $\gamma^*g^K=e^{-c_\gamma}g^K$, for every $\gamma\in \pi_1(M)$. The K\"ahler structure  $(\tilde J,g^K)$ is tautologically invariant by $\ker\rho$ and defines a K\"ahler structure denoted $(\hat J,g^K)$ on $\widehat M:=\widetilde M/\ker\rho$. The K\"ahler manifold $(\widehat M,\hat J, g^K)$ is called the {\em minimal covering} of $(M,J,g)$ (and it actually only depends on the conformal class $[g]$).

We thus obtain the following sequence of Galois coverings
$$\widetilde M \overset{\ker\rho}{\longrightarrow} \widehat M  \overset{\Gamma}{\longrightarrow} M,$$
   where $\Gamma:=\pi_1(M)/\ker\rho$. Since $\rho$ induces an injective group homomorphism $\Gamma\to \mathbb{R}^*_{+}$,
   it follows that the automorphism group $\Gamma$ of the minimal covering is isomorphic to a free Abelian group $\mathbb{Z}^k$, for some $k\in \mathbb{N}$. The integer $k$ is called the {\it rank} of the lcK structure. 

A useful tool in lcK geometry is the so-called {\em Gauduchon metric}. If $(M, J, [g])$ is a compact lcK manifold, the Gauduchon metric $g_0\in[g]$ is characterized by the fact that its Lee form is co-closed:  $\delta^{g_0}\theta_0=0$, and it is unique, up to a positive factor, in the given conformal class, ~\cite{g}.

The so-called {\em twisted differential} of an lcK structure $(M,J,g,\theta)$ is defined as follows: $\di^\theta\colon \Omega^*(M)\to \Omega^{*+1}(M)$, $\di^\theta\alpha:=\di\alpha-\theta\wedge\alpha$. Note that $\di\theta=0$ implies $\di^\theta\circ \di^\theta=0$. However, $\di^\theta$ does not satisfy the Leibniz rule. If $\alpha\in\Omega^k (M)$ and $\beta\in\Omega^\ell (M)$, then the following relation holds:
\[\di^\theta(\alpha\wedge\beta)=\di^\theta\alpha\wedge\beta+(-1)^k\alpha\wedge\di^\theta\beta+\theta\wedge\alpha\wedge\beta.\]
In particular, for $f\in \cC^\infty(M)$, we have
\begin{equation}\label{dtheta}
\di^\theta(f\beta)=\di f\wedge\beta+f\di^\theta\beta.
\end{equation}

\begin{lemma}\label{lem inj}
On a strict lcK manifold $(M, J, [g])$, the twisted differential acting on functions is injective, \emph{i.e.} $\ker(\di^\theta\colon \cC^{\infty}(M)\to\Omega^1(M))=\{0\}$. 
\end{lemma}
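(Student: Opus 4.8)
The plan is to argue by contradiction. Suppose $f\in\cC^\infty(M)$ satisfies $\di^\theta f=0$; by the definition of the twisted differential, for a function this is just the first order linear equation $\di f=f\theta$. I claim that a nonzero $f$ would force the Lee form $\theta$ to be exact, contradicting strictness. Observe first that the equation is formally consistent --- applying $\di$ gives $0=\di f\wedge\theta+f\,\di\theta=f\,\theta\wedge\theta=0$ --- so one really has to exploit the global geometry of $M$ rather than argue locally.

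The crux is to show that the zero set $Z:=f^{-1}(0)$, which is obviously closed, is also open. Fix $p\in Z$ and a coordinate ball $B\ni p$. Given $q\in B$, join $p$ to $q$ by a smooth path $\gamma\colon[0,1]\to B$ with $\gamma(0)=p$, $\gamma(1)=q$, and put $h(t):=f(\gamma(t))$. The equation $\di f=f\theta$ yields $h'(t)=\di f_{\gamma(t)}(\dot\gamma(t))=a(t)\,h(t)$, where $a(t):=\theta_{\gamma(t)}(\dot\gamma(t))$ is continuous; since $h(0)=f(p)=0$, uniqueness of solutions of linear ODEs gives $h\equiv 0$, so $f(q)=0$. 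Hence $B\subseteq Z$, and as $M$ is connected, $Z$ is either empty or all of $M$.

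If $Z=M$ then $f\equiv0$ and we are done. Otherwise $f$ never vanishes, so by connectedness of $M$ it has constant sign; replacing $f$ by $-f$ if necessary, we may assume $f>0$ on $M$. Then $\di f=f\theta$ reads $\theta=\di(\log f)$, so $\theta$ is exact, contradicting the standing assumption that $(J,[g])$ is strict. Therefore $f\equiv0$, which is the assertion. The one point needing care is the openness of $Z$: the path $\gamma$ must be kept inside a fixed chart so that $a(t)$ is genuinely continuous and the elementary uniqueness theorem applies --- beyond that the argument is purely formal and uses neither compactness of $M$ nor the complex structure, only connectedness of $M$ and non-exactness of $\theta$. (Alternatively, one may pull $f$ back to the minimal covering $\widehat M$, where $\theta=\di\varphi$: then $\di(e^{-\varphi}f)=0$, so $e^{-\varphi}f$ is a constant $c$, and the $\Gamma$-invariance of $f$ together with $\gamma^*\varphi=\varphi+c_\gamma$ and the fact that $\Gamma$ is nontrivial for a strict structure forces $c=0$.)
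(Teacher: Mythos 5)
Your argument is correct and is essentially the paper's proof: the paper also observes that a nowhere-vanishing solution of $\di f=f\theta$ would give $\theta=\di\log|f|$, contradicting strictness, and that a zero at one point forces $f\equiv 0$ by uniqueness for the linear first order equation. You have merely spelled out that last uniqueness step (via the open-closed/ODE-along-paths argument, where in fact the chart restriction is unnecessary since $\theta$ is globally smooth), and added an equivalent alternative on the minimal covering.
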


\begin{proof}
Let $f\in\ker(\di^\theta)$, so $f$ satisfies the linear first order differential equation $\di f-f\theta=0$. If $f$ does not vanish at any point of the manifold, then $\theta=\di \log |f|$, which is not possible, since the lcK manifold is assumed to be strict. 
Thus, $f$ vanishes at some point, which implies that $f\equiv 0$, by uniqueness of the solution. 
\end{proof}

\section{Automorphisms of lcK manifolds}
\label{autom}

An automorphism of an lcK manifold $(M, J, [g])$ is a conformal biholomorphism. We denote by $\mathrm{Aut}(M, J, [g])$ the group of all automorphisms and by $\mathfrak{aut}(M, J, [g])$ its Lie algebra. In this section we establish a few properties of the automorphisms of lcK manifolds, that will be used in the sequel. Let us recall the following notions:

\begin{definition}
Let $(M,J,g,\theta)$ be an lcK manifold. A vector field $X$ on $M$ is called \emph{twisted Hamiltonian} if there exists $h_X\in\mathcal{C}^\infty(M)$, such that $X\lrcorner \Omega=\di^\theta h_X$. The space of all such vector fields on $M$ is denoted by $\mathfrak{ham}^\theta(M)$. An action of a Lie group on $M$ is called \emph{twisted Hamiltonian} if all its  fundamental vector fields belong to $\mathfrak{ham}^\theta(M)$.
\end{definition}

\begin{definition}\label{toriclck}
An lcK manifold $(M^{2n},J,[g])$ equipped with an effective holomorphic and twisted Hamiltonian  action of the standard (real) $n$-dimensional torus $T^{n}$,
 is called \emph{toric} lcK. 
\end{definition}

Next we give a class of examples of toric Vaisman manifolds. For this purpose, let us recall the definition of a Sasakian manifold. 
A {\it Sasakian structure} on a Riemannian manifold $(S,g_S)$ is a complex structure $\ti J$ on $\mR\times S$, such that the cone metric $g^K:=e^{-2t}(dt^2+g_S)$ is K\"ahler with respect to $\ti J$, and for each $\lambda\in\mathbb{R}$, the homothety $(t,w)\mapsto (t+\lambda, w)$ is holomorphic.

\begin{example}
An example of a toric lcK manifold is the Hopf manifold $S^1\times S^{2n-1}$, whose natural Vaisman structure is toric, as noticed in \cite[Example 4.8]{p}. More generally, a class of toric Vaisman manifolds can be constructed
as follows. Given a compact toric Hodge manifold $N$ of complex dimension $n-1$, we consider the total space of the $S^1$-bundle $S$ corresponding to the integral cohomology class of its K\"ahler form, which carries a Sasakian metric $g_S$. Hence the product $\mR\times S$ is endowed with a K\"ahler structure  $(g^K:=e^{-2t}(dt^2+g_S), \ti J)$. The action of $\mZ$ by translations on $\mR$ and extended trivially on $S$ is holomorphic and isometric with respect to the gcK structure $(\ti J, \ti g:=e^{2t}g^K, 2\di t)$, which thus projects onto a Vaisman structure $(J,g, \theta)$ on the compact manifold $S^1 \times S$. By construction,  $N$ is the quotient $(S^1\times S)/\{\theta^\sharp, J\theta^\sharp\}$  and its toric K\"ahler structure is the projection of $(J,g)$. 
According to \cite[Theorem~5.1]{p}, this Vaisman structure on $S^1\times S$ is toric.
\end{example}

We continue by proving some preliminary results.

\begin{lemma}\label{theta0}
Let $(M, J, g,\theta)$ be an lcK manifold with fundamental $2$-form $\Omega$. 
There exists a morphism of Lie algebras $\sigma\colon \mathfrak{aut}(M, J, [g])\to \mR$, such that
for any $X\in\mathfrak{aut}(M, J, [g])$, we have
\begin{gather}
\di^\theta(X\lrcorner \Omega)=\sigma(X)\Omega,\label{gamma}\\
\mathcal{L}_X g=(\theta(X)+\sigma(X))g.\label{lieg}
\end{gather}
\end{lemma}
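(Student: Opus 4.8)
The plan is to exploit the two defining properties of a vector field $X\in\mathfrak{aut}(M,J,[g])$: it is holomorphic, $\mathcal{L}_XJ=0$, and its flow preserves the conformal class $[g]$, so $\mathcal{L}_Xg=f_Xg$ for some $f_X\in\cC^\infty(M)$. Since $\Omega=g(J\cdot,\cdot)$, these two facts combine to give $\mathcal{L}_X\Omega=f_X\Omega$. Applying Cartan's formula $\mathcal{L}_X\Omega=\di(X\lrcorner\Omega)+X\lrcorner\di\Omega$ together with $\di\Omega=\theta\wedge\Omega$ and $X\lrcorner(\theta\wedge\Omega)=\theta(X)\,\Omega-\theta\wedge(X\lrcorner\Omega)$, one obtains
\[\di^\theta(X\lrcorner\Omega)=\bigl(f_X-\theta(X)\bigr)\Omega.\]

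To see that the coefficient $f_X-\theta(X)$ is constant, I would apply $\di^\theta$ once more. Since $\di\theta=0$ we have $\di^\theta\circ\di^\theta=0$ and $\di^\theta\Omega=\di\Omega-\theta\wedge\Omega=0$, so \eqref{dtheta} gives
\[0=\di^\theta\bigl((f_X-\theta(X))\Omega\bigr)=\di(f_X-\theta(X))\wedge\Omega.\]
As $n\ge 2$, wedging with the non-degenerate form $\Omega$ is injective on $1$-forms (the same fact used earlier to show that two conformal K\"ahler metrics are homothetic), hence $\di(f_X-\theta(X))=0$; since $M$ is connected, $f_X-\theta(X)$ is a real constant, which we take as the definition of $\sigma(X)$. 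This proves \eqref{gamma}, and \eqref{lieg} follows immediately from $\mathcal{L}_Xg=f_Xg=(\theta(X)+\sigma(X))g$.

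It then remains to verify that $\sigma\colon\mathfrak{aut}(M,J,[g])\to\mR$ is a morphism of Lie algebras. Linearity is clear, since $X\mapsto X\lrcorner\Omega$ and $\di^\theta$ are linear and $\Omega\ne 0$. For the bracket, put $\lambda_X:=\theta(X)+\sigma(X)$, so that \eqref{lieg} reads $\mathcal{L}_Xg=\lambda_Xg$. Using $[\mathcal{L}_X,\mathcal{L}_Y]=\mathcal{L}_{[X,Y]}$ and the fact that $\sigma(X),\sigma(Y)$ are constants, one computes $\mathcal{L}_{[X,Y]}g=(X(\lambda_Y)-Y(\lambda_X))g=(X(\theta(Y))-Y(\theta(X)))g$. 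On the other hand $\mathcal{L}_{[X,Y]}g=\lambda_{[X,Y]}g=(\theta([X,Y])+\sigma([X,Y]))g$, and since $\di\theta=0$, Cartan's formula yields $\mathcal{L}_X\theta=\di(\theta(X))$, whence $\theta([X,Y])=X(\theta(Y))-(\mathcal{L}_X\theta)(Y)=X(\theta(Y))-Y(\theta(X))$. Comparing the two expressions for $\mathcal{L}_{[X,Y]}g$ gives $\sigma([X,Y])=0$. The only delicate points are the constancy of $\sigma(X)$, which uses both $\di\theta=0$ and the hypothesis $n\ge 2$, and the identity $\mathcal{L}_X\theta=\di(\theta(X))$ at the end; the rest is routine.
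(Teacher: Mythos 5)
Your proof is correct and follows essentially the same route as the paper: Cartan's formula plus $\di\Omega=\theta\wedge\Omega$ to get $\di^\theta(X\lrcorner\Omega)=(f_X-\theta(X))\Omega$, then $(\di^\theta)^2=0$ together with the injectivity of $\Omega\wedge\cdot$ on $1$-forms to obtain constancy, and the closedness of $\theta$ for the bracket identity. The only cosmetic difference is that you verify $\sigma([X,Y])=0$ by comparing Lie derivatives of $g$ rather than of $\Omega$, which is an equivalent computation.
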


\begin{proof}
For $X\in\mathfrak{aut}(M, J, [g])$, there exists $f_X\in \cC^\infty(M)$, such that $ \cL_X\Omega=f_X\Omega$. By the Cartan formula, we have 
\begin{equation}\label{eq lie}
(f_X-\theta(X))\Omega=\cL_X\Omega-\theta(X)\Omega= \di (X\lrcorner \Omega)-\theta\wedge(X\lrcorner\Omega)=\di^\theta (X\lrcorner\Omega).
\end{equation} 
Since $(\di^\theta)^2=0$ and $\Omega$ is $\di^\theta$-closed, it follows by \eqref{dtheta} that $\di(f_X-\theta(X))\wedge\Omega=0$, which by the injectivity of $\Omega\wedge\cdot$ on $1$-forms implies that $ \di(f_X-\theta(X))=0$. Hence there exists $\sigma(X)\in \mR$, such that $f_X=\theta(X)+\sigma(X)$. Since $X$ is a holomorphic vector field, it follows that $\cL_Xg=(\theta(X)+\sigma(X))g$. 

It remains to show that $\sigma$ is a Lie algebra morphism, \emph{i.e.} that $\sigma([X,Y])=0$, for all $X,Y\in \mathfrak{aut}(M, J, [g])$. Since $\theta$ is closed and $\sigma(X)$ and $\sigma(Y)$ are constant functions, we obtain
\begin{equation}\label{thetaXY}
\theta([X,Y])=X(\theta(Y))-Y(\theta(X))=X(f_Y-\sigma(Y))-Y(f_X-\sigma(X))=X(f_Y)-Y(f_X).
\end{equation}
On the other hand, we compute
$$\mathcal{L}_{[X,Y]}\Omega=\mathcal{L}_X\mathcal{L}_Y\Omega-\mathcal{L}_Y\mathcal{L}_X\Omega=\mathcal{L}_X(f_Y\Omega)-\mathcal{L}_Y(f_X\Omega)=(X(f_Y)-Y(f_X))\Omega,$$
so that $f_{[X,Y]}=X(f_Y)-Y(f_X)$. By \eqref{thetaXY}, it follows that $\sigma([X,Y])=f_{[X,Y]}-\theta([X,Y])=0$.
\end{proof}

\begin{remark} \label{rem lift}
 Let $(M,J,[g])$ be an lcK manifold. On its universal covering $\widetilde M$ endowed with the K\"ahler structure $(g^K,\Omega^K)$, Lemma~\ref{theta0} reads 
\begin{gather}
\di(\widetilde X\lrcorner\Omega^K)=\sigma(X)\Omega^K,\label{gammanew}\\
\mathcal{L}_{\widetilde X} g^K=\sigma(X)g^K,\label{liegnew}
\end{gather}
for each vector field $X$ on $M$, whose lift on $\widetilde M$ is denoted by $\widetilde X$.
\end{remark}

We denote by $\mathrm{Aut}_s(M,J,[g])$ the subgroup of $\mathrm{Aut}(M,J,[g])$ consisting of automorphisms whose lifts to $\widetilde M$ are isometries with respect to the K\"ahler metric $g^K$. By \eqref{liegnew}, its Lie algebra is $\mathfrak{aut}_s(M, J, [g]):=\ker\sigma$. The elements of $\mathrm{Aut}_s(M,J,[g])$ are called {\it special} automorphisms of the lcK structure $(M,J,[g])$.

\begin{lemma}\label{inclusion}
On an lcK manifold $(M^{2n}, J, [g])$, the following inclusion holds:
\begin{equation}\label{incl}
\mathfrak{ham}^\theta(M)\cap\mathfrak{hol}(M)\subseteq \mathfrak{aut}_s(M, J, [g]),
\end{equation}
where $\mathfrak{hol}(M)$ denotes the set of holomorphic vector fields on $(M,J)$.
\end{lemma}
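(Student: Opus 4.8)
The plan is to show first that a holomorphic twisted Hamiltonian vector field is automatically an automorphism of the lcK structure, and then to extract $\sigma(X)=0$ immediately from the fact that $X\lrcorner\Omega$ is $\di^\theta$-exact.

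First I would fix $X\in\mathfrak{ham}^\theta(M)\cap\mathfrak{hol}(M)$, so that $\cL_XJ=0$ and $X\lrcorner\Omega=\di^\theta h_X$ for some $h_X\in\cC^\infty(M)$. Applying the Cartan formula together with the lcK identity $\di\Omega=\theta\wedge\Omega$ and the elementary relation $X\lrcorner(\theta\wedge\Omega)=\theta(X)\Omega-\theta\wedge(X\lrcorner\Omega)$, one gets
\[\cL_X\Omega=\di(X\lrcorner\Omega)+X\lrcorner\di\Omega=\di^\theta(X\lrcorner\Omega)+\theta(X)\Omega .\]
Since $X\lrcorner\Omega=\di^\theta h_X$ and $(\di^\theta)^2=0$, the first term on the right vanishes, so $\cL_X\Omega=\theta(X)\Omega$. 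Because $X$ is holomorphic, this forces $\cL_Xg=\theta(X)g$ (exactly as in the last step of the proof of Lemma~\ref{theta0}), hence $X$ is a holomorphic conformal vector field, that is, $X\in\mathfrak{aut}(M,J,[g])$.

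Now I may invoke Lemma~\ref{theta0}, which yields $\di^\theta(X\lrcorner\Omega)=\sigma(X)\Omega$; but the left-hand side equals $\di^\theta(\di^\theta h_X)=0$ and $\Omega$ is nowhere vanishing, so $\sigma(X)=0$, i.e. $X\in\ker\sigma=\mathfrak{aut}_s(M,J,[g])$. This establishes the inclusion \eqref{incl}.

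The computation is elementary, so I do not expect a genuine obstacle; the one point deserving attention is that Lemma~\ref{theta0} is stated only for elements of $\mathfrak{aut}(M,J,[g])$, which is why the preliminary verification that $X$ is an automorphism — rather than merely holomorphic and twisted Hamiltonian — cannot be skipped, and it is precisely there that holomorphicity of $X$ is used.
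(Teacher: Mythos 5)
Your proposal is correct and follows essentially the same route as the paper: the same Cartan-formula computation giving $\cL_X\Omega=\theta(X)\Omega$ (using $\di^\theta(X\lrcorner\Omega)=(\di^\theta)^2h_X=0$ and $\di^\theta\Omega=0$), then holomorphicity to pass to $\cL_Xg=\theta(X)g$, and finally Lemma~\ref{theta0} to conclude $\sigma(X)=0$. The only cosmetic difference is that you read off $\sigma(X)=0$ from \eqref{gamma} while the paper compares with \eqref{lieg}; these are equivalent conclusions of the same lemma.
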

\begin{proof}
Let $X\in\mathfrak{ham}^\theta(M)\cap\mathfrak{hol}(M)$. Then there exists $h_X\in\mathcal{C}^\infty(M)$, such that $X\lrcorner\Omega=\di^\theta h_X$. We now compute using Cartan's formula and the fact that $\di=\di^\theta+\theta\wedge\cdot$
\begin{equation}
\begin{split}
\mathcal{L}_X\Omega&=\di^\theta(X\lrcorner \Omega)+\theta\wedge(X\lrcorner \Omega)+X\lrcorner\di^\theta\Omega+X\lrcorner(\theta\wedge\Omega)\\
&=\theta\wedge(X\lrcorner \Omega)+X\lrcorner(\theta\wedge\Omega)=\theta(X)\Omega.
\end{split}
\end{equation}
Since $X$ is a holomorphic vector field, it follows that $\mathcal{L}_{X}g=\theta(X)g$. Hence, $X\in\mathfrak{aut}(M,J,[g])$. Moreover, \eqref{lieg} implies that $X\in\ker\sigma$.
\end{proof}

It turns out that this inclusion is even an equality under some additional assumption:
\begin{lemma}\label{invincl}
Let $(M^{2n}, J, [g])$ be a compact lcK manifold. Assume that the minimal covering of $M$ coincides with its universal covering. Then $\mathfrak{ham}^\theta(M)\cap\mathfrak{hol}(M)= \mathfrak{aut}_s(M, J, [g]).$
\end{lemma}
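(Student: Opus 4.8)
The plan is to prove the inclusion $\mathfrak{aut}_s(M,J,[g])\subseteq\mathfrak{ham}^\theta(M)\cap\mathfrak{hol}(M)$, since the reverse one is exactly Lemma~\ref{inclusion}. So I would fix $X\in\mathfrak{aut}_s(M,J,[g])=\ker\sigma$. As every automorphism of an lcK structure is in particular a biholomorphism, $X\in\mathfrak{hol}(M)$, and it remains to produce $h_X\in\cC^\infty(M)$ with $X\lrcorner\Omega=\di^\theta h_X$. By Lemma~\ref{theta0} applied to $X$, and using $\sigma(X)=0$, the $1$-form $X\lrcorner\Omega$ is already $\di^\theta$-closed; hence the only thing to prove is that it is $\di^\theta$-exact (as the twisted differential of a \emph{function}).

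First I would lift everything to the universal covering $\widetilde M$, which under our hypothesis equals the minimal covering, writing $\tilde\theta=\di\varphi$ and $g^K=e^{-\varphi}\tilde g$ as in the preliminaries. Then $\widetilde X\lrcorner\Omega^K=e^{-\varphi}(\widetilde X\lrcorner\tilde\Omega)$, and by \eqref{gammanew} with $\sigma(X)=0$ this $1$-form is closed; since $\widetilde M$ is simply connected, there is $u\in\cC^\infty(\widetilde M)$ with $\di u=\widetilde X\lrcorner\Omega^K$. A direct check then gives that $h:=e^\varphi u$ satisfies $\di h-h\,\tilde\theta=\widetilde X\lrcorner\tilde\Omega$, i.e.\ $h$ is a twisted Hamiltonian for the lift $\widetilde X$ on $\widetilde M$. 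What is left---and this is the heart of the argument---is to choose $u$ so that $h$ is invariant under the deck group $\pi_1(M)$, hence descends to $M$.

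To control the defect of invariance, I would use $\gamma^*\varphi=\varphi+c_\gamma$ from \eqref{rho}, which gives $\gamma^*\Omega^K=e^{-c_\gamma}\Omega^K$; combined with the $\pi_1(M)$-invariance of $\widetilde X$ this yields $\di(\gamma^* u)=e^{-c_\gamma}\di u$, so $\gamma^* u=e^{-c_\gamma}u+a_\gamma$ for constants $a_\gamma\in\mathbb{R}$. A short computation shows that $\gamma\mapsto a_\gamma$ is a $1$-cocycle, $a_{\gamma\gamma'}=a_\gamma+e^{-c_\gamma}a_{\gamma'}$. Now under our hypothesis $\pi_1(M)=\Gamma$ is abelian (indeed $\cong\mathbb{Z}^k$), so commuting $\gamma$ and $\gamma'$ forces $a_\gamma(1-e^{-c_{\gamma'}})=a_{\gamma'}(1-e^{-c_\gamma})$ for all $\gamma,\gamma'$; moreover, since the lcK structure is strict, $\rho$ cannot be trivial---otherwise $\varphi$ would be $\pi_1(M)$-invariant, descend to $M$, and exhibit $\theta$ as exact---so there is $\gamma_0$ with $c_{\gamma_0}\neq0$. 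Setting $b:=a_{\gamma_0}/(1-e^{-c_{\gamma_0}})$, the displayed identity gives $a_\gamma=b(1-e^{-c_\gamma})$ for every $\gamma$; in other words the obstruction $(a_\gamma)$ is a coboundary, i.e.\ trivial in $H^1(\pi_1(M);\mathbb{R}_\rho)$.

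Finally, replacing $u$ by $u-b$ does not change $\di u$, and now $\gamma^*(u-b)=e^{-c_\gamma}(u-b)$, so the corresponding $h=e^\varphi(u-b)$ satisfies $\gamma^* h=e^{\varphi+c_\gamma}e^{-c_\gamma}(u-b)=h$ for all $\gamma\in\pi_1(M)$. Hence $h$ descends to some $h_X\in\cC^\infty(M)$, and pulling back to $\widetilde M$ and using injectivity of the pull-back on forms shows $\di^\theta h_X=X\lrcorner\Omega$ on $M$. Therefore $X\in\mathfrak{ham}^\theta(M)$, which together with $X\in\mathfrak{hol}(M)$ establishes the inclusion and finishes the proof. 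I expect the only genuinely non-formal point to be the cohomological vanishing in the third paragraph---recognising the invariance defect as a cocycle and showing it splits, which uses both that $\pi_1(M)$ is abelian and that the structure is strict; the rest is routine bookkeeping along the tower $\widetilde M\to M$.
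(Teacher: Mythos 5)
Your proposal is correct and follows essentially the same route as the paper: lift $X$, take a primitive $u$ of the closed form $\widetilde X\lrcorner\Omega^K$ on $\widetilde M$, use commutativity of $\pi_1(M)$ together with one deck transformation $\gamma_0$ with $c_{\gamma_0}\neq 0$ to kill the constants $a_\gamma$ (the paper's $\lambda_\gamma$) after adjusting $u$ by a constant, and then descend $e^\varphi u$ to a twisted Hamiltonian on $M$. The only differences are cosmetic: you phrase the vanishing of the defect as a cocycle being a coboundary and obtain abelianness from the injection $\rho\colon\pi_1(M)\to\mathbb{R}$, whereas the paper normalizes $\lambda_{\gamma_0}=0$ first and deduces commutativity from the fact that a commutator of homotheties of $g^K$ is an isometry.
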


\begin{proof}
 Let $\Omega^K=e^{-\varphi} \pi^*\Omega$ be the fundamental form of the K\"ahler structure on the universal covering of $M$, where $\pi^*\theta=\di\varphi$. Let us fix some $X\in\mathfrak{aut}_s(M, J, [g])$. By \eqref{gammanew}, the following equality holds $\di(\widetilde X\lrcorner \Omega^K)=0$, where $\widetilde X$ is the lift of $X$ to $\widetilde M$. Thus, there exists a function $h\in\mathcal{C}^\infty(\widetilde M)$, such that $\di h=\widetilde X\lrcorner \Omega^K$. 
 
The hypothesis implies that every non-trivial element in $\pi_1(M)$ acts by a strict homothety on $(\widetilde M,g^K)$. Thus $\pi_1(M)$ is commutative, since a commutator of homotheties is an isometry.
For each $\gamma\in\pi_1(M)$, we have $\gamma^*\varphi=\varphi+c_\gamma$, where $\gamma\overset{\rho}{\mapsto} c_\gamma$ is the group morphism defined in \eqref{rho}. Hence, we have $\gamma^*\Omega^K=e^{-c_{\gamma}}\Omega^K$, which implies $\gamma^*\di h=e^{-c_{\gamma}}\di h$, or equivalently $\di(\gamma^*h-e^{-c_\gamma}h)=0$. Therefore, there exists a constant $\lambda_\gamma$, such that $\gamma^*h-e^{-c_\gamma}h=\lambda_\gamma$. Let us fix some $\gamma_0\in\pi_1(M)\setminus\{\mathrm{id}\}$. By replacing $h$ with $h-\frac{\lambda_{\gamma_0}}{1-e^{-c_{\gamma_0}}}$, we may assume $\lambda_{\gamma_0}=0$.
We now prove that in fact $\lambda_\gamma=0$, for all $\gamma\in\pi_1(M)$. Comparing the following two relations
\[\gamma^*\gamma_0^* h=\gamma^*(e^{-c_{\gamma_0}}h)=e^{-c_{\gamma_0}}(e^{-c_{\gamma}}h+\lambda_{\gamma}),\]
\[\gamma_0^*\gamma^* h=\gamma_0^*(e^{-c_{\gamma}}h+\lambda_{\gamma})=e^{-c_{\gamma_0}-c_{\gamma}}h+\lambda_{\gamma},\]
 and using the commutativity of $\pi_1(M)$, we obtain $\lambda_{\gamma}(1-e^{-c_{\gamma_0}})=0$. As $c_{\gamma_0}\neq 0$, it follows that $\lambda_{\gamma}=0$, for any $\gamma\in\pi_1(M)$. Concluding, we have shown that
\[\gamma^*h=e^{-c_{\gamma}}h, \quad\forall \gamma\in\pi_1(M).\]
It follows that the function $e^{\varphi}h$ projects onto $M$. Moreover, on $\witi M$ we have
\[\di^\theta(e^{\varphi}h)=e^\varphi (h \theta+\di h)-e^\varphi h\theta=e^\varphi\widetilde X\lrcorner\Omega^K=\widetilde X\lrcorner\pi^*\Omega.\]
Thus, $X$ is a twisted Hamiltonian vector field on $M$, whose Hamiltonian function is the projection of $e^{\varphi}h$ onto $M$.
\end{proof}

\begin{proposition}\label{infisom}
Let $(M^{2n}, J, [g])$ be a compact lcK manifold. The following assertions hold:
\begin{enumerate}
\item[$(i)$] Every vector field in $\mathfrak{aut}(M,J,[g])$ is a Killing vector field for the Gauduchon metric $g_0\in[g]$.
\item[$(ii)$]  If $\theta_0$  denotes the Lee form of the Gauduchon metric, we have $\mathfrak{aut}_s(M,J,[g])\subseteq\ker\theta_0$.
\item[$(iii)$] For any $X\in\mathfrak{aut}_s(M,J,[g])$, its lift $\witi X$, resp. $\widehat X$, to the universal (resp. minimal) covering of $(M, J, [g])$ is a Killing vector field for the K\"ahler metric $g^K$.
\end{enumerate}
\end{proposition}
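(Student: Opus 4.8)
The plan is to prove the three assertions in turn, each one following from the previous ones together with the lemmas already at our disposal; the only substantial step is $(i)$.

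For $(i)$ I would combine the uniqueness of the Gauduchon metric with a flow argument. Fix $X\in\mathfrak{aut}(M,J,[g])$; since $M$ is compact, its flow $(\phi_t)$ is complete and consists of lcK automorphisms. For each $t$ the metric $\phi_t^*g_0$ is $J$-compatible (because $\phi_t$ is holomorphic), lies in $[g]$ (because $\phi_t$ is conformal), and has Lee form $\phi_t^*\theta_0$ by naturality of the Lee form under biholomorphisms; moreover, since $\phi_t\colon(M,\phi_t^*g_0)\to(M,g_0)$ is by construction an isometry, the codifferential is natural and $\delta^{\phi_t^*g_0}(\phi_t^*\theta_0)=\phi_t^*(\delta^{g_0}\theta_0)=0$, so $\phi_t^*g_0$ is again a Gauduchon metric. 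By uniqueness of the Gauduchon metric up to a positive factor, $\phi_t^*g_0=c(t)\,g_0$ with $c(t)>0$; comparing total volumes, which are preserved by the diffeomorphism $\phi_t$, forces $c(t)^n=1$, hence $c(t)\equiv1$ and $\phi_t^*g_0=g_0$ for all $t$. Differentiating at $t=0$ gives $\mathcal{L}_Xg_0=0$, i.e. $X$ is a Killing field for $g_0$.

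Part $(ii)$ is then immediate: equation \eqref{lieg} applied to the Gauduchon metric reads $\mathcal{L}_Xg_0=(\theta_0(X)+\sigma(X))g_0$, and the left-hand side vanishes by $(i)$, so $\theta_0(X)=-\sigma(X)$ for every $X\in\mathfrak{aut}(M,J,[g])$; specialising to $X\in\mathfrak{aut}_s(M,J,[g])=\ker\sigma$ yields $\theta_0(X)=0$. Part $(iii)$ is also essentially formal: given $X\in\mathfrak{aut}_s(M,J,[g])$, let $\widetilde X$ be the unique vector field on $\widetilde M$ that is $\pi$-related to $X$. By \eqref{liegnew} we have $\mathcal{L}_{\widetilde X}g^K=\sigma(X)g^K=0$, so $\widetilde X$ is Killing for $g^K$ on $\widetilde M$; being invariant under the deck group $\pi_1(M)$, in particular under $\ker\rho$, it descends to a vector field $\widehat X$ on $\widehat M=\widetilde M/\ker\rho$, which is Killing for the induced K\"ahler metric since the covering projection $\widetilde M\to\widehat M$ is a local isometry.

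The main obstacle, such as it is, sits entirely in part $(i)$: one must verify carefully that $\phi_t^*g_0$ is genuinely a Gauduchon metric in the conformal class $[g]$, i.e. check the naturality of both the Lee form and the codifferential under pull-back by $\phi_t$, so that the uniqueness of the Gauduchon gauge can be invoked; once this is in place the volume comparison closes the argument, and parts $(ii)$ and $(iii)$ need nothing beyond the lemmas and the definition of $\mathfrak{aut}_s$.
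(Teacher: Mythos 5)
Your proposal is correct and follows essentially the same route as the paper: pull back the Gauduchon metric by the automorphism (you use the flow of $X$, the paper uses an arbitrary $\psi\in\mathrm{Aut}(M,J,[g])$), invoke uniqueness of the Gauduchon metric up to a positive constant, and kill the constant using compactness (your volume comparison is just the standard proof that a homothety of a compact manifold is an isometry, which the paper cites directly), with $(ii)$ and $(iii)$ read off from \eqref{lieg} and \eqref{liegnew} exactly as in the paper.
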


\begin{proof}
$(i)$ It suffices to prove that any $\psi\in\mathrm{Aut}(M^{2n}, J, [g])$ is an isometry of $g_0$. We first notice that the metric $\psi^*g_0\in [\psi^*g]$ is also a Gauduchon metric.  
By definition, $\psi$ preserves the conformal class, so $ [\psi^*g]= [g]$.
Since in any conformal class, a Gauduchon metric is unique up to a positive constant, there exists $c>0$, such that $\psi^*g_0=c g_0$. This means that $\psi$ is a homothety. However, on a compact manifold any homothety is already an isometry. Therefore, $\psi$ is an isometry for $g_0$.

$(ii)$ Follows from $(i)$ and \eqref{lieg}.

$(iii)$ Follows from \eqref{liegnew}.
\end{proof}

\begin{proposition}\label{isotropic} Let $(M^{2n}, J, [g])$ be a compact strict lcK manifold  endowed with an effective twisted Hamiltonian holomorphic $T^m$-action. Then the orbits of the action are isotropic with respect to the fundamental $2$-form $\Omega$ and $m\leq n$.
\end{proposition}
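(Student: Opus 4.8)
The plan is to fix a basis $X_1,\dots,X_m$ of the fundamental vector fields of the $T^m$-action, prove that $\Omega(X_i,X_j)=0$ for all $i,j$ (which is precisely the isotropy of the orbits), and then read off the bound $m\le n$ from symplectic linear algebra applied at a generic point. First I would record the relevant structure: the $X_i$ are holomorphic, pairwise commute (the torus being abelian), and are twisted Hamiltonian, say $X_i\lrcorner\Omega=\di^\theta h_i$ with $h_i\in\cC^\infty(M)$. By Lemma~\ref{inclusion} each $X_i$ lies in $\mathfrak{aut}_s(M,J,[g])$. Since the twisted Hamiltonian condition (and the isotropy condition) are conformally invariant --- if $g'=e^{-\varphi}g$ then $X_i\lrcorner\Omega'=\di^{\theta'}(e^{-\varphi}h_i)$ --- I may assume that $g$ is the Gauduchon metric; then Proposition~\ref{infisom} gives that each $X_i$ is Killing for $g$ and that $\theta(X_i)=0$, whence $\mathcal{L}_{X_i}\Omega=0$ and $\mathcal{L}_{X_i}\theta=\di(\theta(X_i))=0$.

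The core of the argument is the following computation. By Cartan's formula and $[X_i,X_j]=0$ one has $\mathcal{L}_{X_i}(X_j\lrcorner\Omega)=[X_i,X_j]\lrcorner\Omega+X_j\lrcorner\mathcal{L}_{X_i}\Omega=0$. On the other hand, substituting $X_j\lrcorner\Omega=\di^\theta h_j=\di h_j-h_j\theta$ and using $\mathcal{L}_{X_i}\theta=0$, the same Lie derivative equals $\di(X_i(h_j))-X_i(h_j)\,\theta=\di^\theta(X_i(h_j))$. Comparing the two expressions gives $\di^\theta(X_i(h_j))=0$; and this is where strictness of the lcK structure is used, via Lemma~\ref{lem inj}: the twisted differential is injective on functions, so $X_i(h_j)=0$ for all $i,j$. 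Hence $\Omega(X_i,X_j)=(\di^\theta h_i)(X_j)=X_j(h_i)-h_i\,\theta(X_j)=0$, so the tangent space of each orbit, which is spanned by $X_1,\dots,X_m$, is $\Omega$-isotropic.

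For the inequality, I would note that since $\Omega$ is nondegenerate on the $2n$-dimensional tangent spaces, every $\Omega$-isotropic subspace has dimension at most $n$; thus it suffices to produce a single point where $X_1,\dots,X_m$ are linearly independent. Taking $p$ in a principal orbit, the principal isotropy subgroup of the effective action of the compact connected torus $T^m$ is trivial --- conjugation being trivial in an abelian group, this subgroup coincides with the stabilizer of every point of the dense connected union of principal orbits, hence fixes $M$ pointwise and so is trivial by effectiveness --- so that orbit is $m$-dimensional, its tangent space at $p$ is spanned by $X_1(p),\dots,X_m(p)$ and is isotropic, forcing $m\le n$.

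I expect the middle step to be the real obstacle: $\di^\theta$ does not satisfy the Leibniz rule, so one must track the terms in $\mathcal{L}_{X_i}(\di^\theta h_j)$ carefully until they collapse to $\di^\theta(X_i(h_j))$, after which strictness (Lemma~\ref{lem inj}) does the rest; the first and last steps are more routine, the last one modulo one appeal to the structure theory of compact torus actions (existence of a dense connected principal stratum with conjugate isotropy).
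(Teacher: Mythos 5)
Your proof is correct and takes essentially the same route as the paper: both rest on Lemma~\ref{inclusion}, a Cartan/Lie-derivative computation showing that a suitable function is $\di^\theta$-closed, the injectivity of $\di^\theta$ on functions (Lemma~\ref{lem inj}, where strictness enters), and the principal orbit theorem to get $m\le n$. The only cosmetic difference is that you first pass to the Gauduchon gauge and work with the Hamiltonian functions $h_i$, whereas the paper computes $\di(\Omega(X,Y))=\Omega(X,Y)\theta$ directly for an arbitrary metric in the conformal class.
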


\begin{proof}
Let $X,Y$ be two fundamental vector fields associated to the given $T^m$-action on $M$. Lemma~\ref{inclusion} implies that $X,Y\in\mathfrak{aut}_s(M)$.  
We then compute using Lemma~\ref{theta0} and the fact that $[X,Y]=0$:
\begin{equation*}
\begin{split}
\di(\Omega(X,Y))&=\di(Y\lrcorner\Omega(X))=\mathcal{L}_Y(\Omega(X))-Y\lrcorner\di(\Omega(X))=(\mathcal{L}_Y\Omega)(X)-Y\lrcorner(\mathcal{L}_X\Omega-X\lrcorner \di\Omega)\\
&=\theta(Y) \Omega (X)-\theta(X)\Omega(Y)+Y\lrcorner X\lrcorner(\theta\wedge\Omega)=\Omega(X,Y)\theta.
\end{split}
\end{equation*}

This shows that $\di^\theta(\Omega(X,Y))=0$. By Lemma~\ref{lem inj}, we conclude that $\Omega(X,Y)=0$, so each $T^m$-orbit is isotropic with respect to $\Omega$ and hence has dimension at most $n$.
On the other hand, a known consequence of the principal orbit theorem (see \emph{e.g.} \cite[Theorem~2.8.5]{dk}) is that any $T^m$ acting effectively on a compact manifold,  acts freely on its principal orbit. Thus, it follows that $m\leq n$.

\end{proof}

\section{Group actions on lcK manifolds}
\label{group}

We start with a few general lemmas about lifts of group actions from the basis to the total space of a covering. Let $G$ be a connected Lie group acting smoothly on a connected manifold $M$, and let $\pi:\widehat M\to M$ be a covering with connected covering space. For every $x\in M$ we denote by $f_x:G\to M$ the map $f_x(a):=a\cdot x$.

\begin{lemma} \label{lift}
The action of $G$ on $M$ lifts to an action of $G$ on $\widehat M$ if and only if for each $x\in M$ and $\hat x\in\widehat M$ with $\pi(\hat x)=x$, the map $f_x$ lifts to a smooth map $f_{\hat x}:G\to \widehat M$ which maps $1_G$ to $\hat x$.
\end{lemma}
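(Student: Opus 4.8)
The plan is to prove both implications of Lemma~\ref{lift} directly, using standard covering space theory (the homotopy lifting property and the uniqueness of lifts).

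\emph{Necessity} is essentially immediate. If the action $G\times M\to M$ lifts to an action $G\times\widehat M\to\widehat M$, then for each $\hat x\in\widehat M$ with $\pi(\hat x)=x$, the orbit map $a\mapsto a\cdot\hat x$ is a smooth map $G\to\widehat M$ covering $f_x$ (by equivariance of $\pi$ for the two actions) and sending $1_G$ to $\hat x$. So one simply takes $f_{\hat x}$ to be this orbit map.

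\emph{Sufficiency} is the substantive direction, and here is where I expect the main work. Given the lifts $f_{\hat x}$, I want to define an action $\widehat M\times G\to\widehat M$ (say), $(\hat x, a)\mapsto a\cdot\hat x := f_{\hat x}(a)$, and check that it is smooth and satisfies the action axioms. First, $a\cdot\hat x$ is a lift of $a\cdot x$ by construction, so $\pi$ intertwines the candidate action with the given one. Smoothness: since $G$ is connected, and $\widehat M\to M$ is a covering, the map $(\hat x,a)\mapsto f_{\hat x}(a)$ is the lift of the smooth map $(\hat x,a)\mapsto a\cdot\pi(\hat x)$ through the covering $\pi$, along the connected space $\widehat M\times G$ (which retracts onto $\widehat M\times\{1_G\}$, where the lift is the identity on the first factor and hence continuous); local triviality of the covering then makes the lift smooth. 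The identity axiom $1_G\cdot\hat x=\hat x$ holds because $f_{\hat x}(1_G)=\hat x$ by hypothesis. The associativity axiom $a\cdot(b\cdot\hat x)=(ab)\cdot\hat x$ is the delicate point: for fixed $\hat x$, both sides, as functions of $(a,b)\in G\times G$, are continuous lifts of $(a,b)\mapsto (ab)\cdot\pi(\hat x)$ through $\pi$, and they agree at $(1_G,1_G)$; since $G\times G$ is connected, uniqueness of lifts forces them to coincide everywhere. (One checks the two sides are indeed lifts of the same map using that $a\cdot(-)$ covers $a\cdot(-)$ downstairs, which follows from the definition.)

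The main obstacle, then, is the bookkeeping around the uniqueness-of-lifts argument: one must be careful that the map ``$a\cdot(-)$ on $\widehat M$'' is well-defined and continuous \emph{before} knowing it is an action, so that applying it to $b\cdot\hat x$ makes sense. The clean way to organize this is to first establish smoothness of $F:\widehat M\times G\to\widehat M$, $F(\hat x,a)=f_{\hat x}(a)$, as the unique continuous lift of $(\hat x,a)\mapsto a\cdot\pi(\hat x)$ satisfying $F(\hat x,1_G)=\hat x$ (uniqueness on the connected $\widehat M\times G$), and only then deduce the action axioms from uniqueness of lifts over connected domains. No step requires anything beyond elementary covering theory, so there is no serious difficulty once the order of the arguments is fixed.
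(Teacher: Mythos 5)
Your proposal is correct and follows essentially the same route as the paper: define the joint map $(a,\hat x)\mapsto f_{\hat x}(a)$, note that it covers the smooth action downstairs to get smoothness, and obtain the action axioms from a connectedness argument (your ``uniqueness of lifts over the connected $G\times G$'' is the same as the paper's open--closed coincidence-set argument in $G\times G\times\widehat M$). If anything, you are slightly more careful than the paper about establishing continuity of the joint map before invoking the uniqueness of lifts, which is a reasonable refinement rather than a different method.
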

 \begin{proof}
One direction is clear. Conversely, we define the map $f:G\times \widehat M\to \widehat M$ by $(a,\hat x)\mapsto a\cdot \hat x:=f_{\hat x}(a)$. Since $\pi \circ f (a,\hat x)=a\cdot \pi(\hat x)$, the map $f$ is smooth. Moreover, for every $a,b\in G$ and $\hat x\in\widehat M$ we have 
$$\pi(a\cdot(b\cdot\hat x))=a\cdot\pi(b\cdot \hat x)=a\cdot(b\cdot\pi(\hat x))=(ab)\cdot\pi(\hat x)=\pi((ab)\cdot\hat x),$$
thus showing that $a\cdot(b\cdot\hat x)$ and $(ab)\cdot\hat x$ belong to the same fibre. By continuity of the map $f$, the set of points $(a,b,\hat x)$ such that $a\cdot(b\cdot\hat x)=(ab)\cdot\hat x$ is open, closed and contains $1_G\times G\times\widehat M\subset G\times G\times\widehat M$. Since $G\times G\times\widehat M$ is connected, the above relation holds identically, thus showing that $f$ is a group action.
\end{proof}

\begin{corollary} \label{product}
If $G=G_1\times G_2$ acts on $M$ and the action of $G_i$ on $M$ lifts to an action of $G_i$ on $\widehat M$ for $i=1,2$, then the  action of $G$ on $M$ lifts to an action of $G$ on $\widehat M$.
\end{corollary}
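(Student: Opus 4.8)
The plan is to verify the lifting criterion from Lemma~\ref{lift} for the product group $G=G_1\times G_2$. Fix $x\in M$ and $\hat x\in\widehat M$ with $\pi(\hat x)=x$; by Lemma~\ref{lift} it suffices to produce a smooth lift $f_{\hat x}\colon G\to\widehat M$ of the orbit map $f_x(a_1,a_2)=(a_1a_2)\cdot x$ sending $1_G$ to $\hat x$. The hypothesis gives us, for $i=1,2$, lifted actions of $G_i$ on $\widehat M$; denote by $(a_i,\hat y)\mapsto a_i\cdot\hat y$ the resulting smooth maps $G_i\times\widehat M\to\widehat M$.

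First I would define $f_{\hat x}(a_1,a_2):=a_1\cdot(a_2\cdot\hat x)$, using first the lifted $G_2$-action and then the lifted $G_1$-action. This is manifestly smooth as a composition of the two smooth lifted-action maps, and it sends $(1_{G_1},1_{G_2})$ to $\hat x$. To see that it lifts $f_x$, note that $\pi$ intertwines each lifted $G_i$-action with the original $G_i$-action on $M$, so
\[
\pi\bigl(f_{\hat x}(a_1,a_2)\bigr)=\pi\bigl(a_1\cdot(a_2\cdot\hat x)\bigr)=a_1\cdot\pi(a_2\cdot\hat x)=a_1\cdot\bigl(a_2\cdot\pi(\hat x)\bigr)=a_1\cdot(a_2\cdot x).
\]
Since in $M$ the actions of $G_1$ and $G_2$ are the restrictions of the $G$-action and $G_1$, $G_2$ commute inside $G=G_1\times G_2$, we have $a_1\cdot(a_2\cdot x)=(a_1,a_2)\cdot x=f_x(a_1,a_2)$. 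Hence $\pi\circ f_{\hat x}=f_x$, and Lemma~\ref{lift} applies to give the desired lifted $G$-action on $\widehat M$.

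I do not expect any serious obstacle here: the statement is essentially a formal consequence of Lemma~\ref{lift} together with the commuting-factors structure of a direct product. The only point that requires a word of care is the very last identification $a_1\cdot(a_2\cdot x)=(a_1,a_2)\cdot x$ on $M$ itself — one must use that the $G_i$-actions on $M$ are obtained by restricting the $G$-action along the inclusions $G_i\hookrightarrow G$, so that this equality is just the group law $(a_1,1)(1,a_2)=(a_1,a_2)$ in $G$. (Symmetrically one could define $f_{\hat x}(a_1,a_2)=a_2\cdot(a_1\cdot\hat x)$; the two lifts agree by the uniqueness of lifts with a prescribed value at $1_G$, though this remark is not needed for the proof.)
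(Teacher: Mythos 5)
Your proof is correct and coincides with the paper's argument: both define the lift $f_{\hat x}(a_1,a_2):=a_1\cdot(a_2\cdot\hat x)$ from the two lifted factor actions and then invoke Lemma~\ref{lift}. Your write-up just spells out the verification that the paper leaves implicit.
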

\begin{proof}
We apply the previous lemma, noting that for each $x\in M$ and $\hat x\in\widehat M$ the map $f_x$ lifts to the map $f_{\hat x}(a_1,a_2):=a_1\cdot(a_2\cdot\hat x)$.
\end{proof}

\begin{lemma} \label{lift2}
Assume in addition that $\widehat M$ is a Galois covering of $M$ whose automorphism group has no torsion and let $p:\widehat G\to G$ be a finite covering. If the induced action of $\widehat G$ on $M$ lifts to $\widehat M$, then the action of $G$ lifts to $\widehat M$.
\end{lemma}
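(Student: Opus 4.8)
The plan is to lift the $\widehat G$-action to $\widehat M$ first (this holds by hypothesis), and then descend along the finite covering $p\colon\widehat G\to G$ to get a $G$-action on $\widehat M$. The key point is that the obstruction to such a descent lives in the finite group $\ker p$, while the automorphism group of the Galois covering $\widehat M\to M$ is torsion-free; so the obstruction must be trivial. Concretely, I would use the criterion of Lemma~\ref{lift}: fix $x\in M$ and $\hat x\in\widehat M$ with $\pi(\hat x)=x$, and show that the orbit map $f_x\colon G\to M$, $f_x(a)=a\cdot x$, lifts to a smooth map $f_{\hat x}\colon G\to\widehat M$ sending $1_G$ to $\hat x$.

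Here is how I would build that lift. Since the $\widehat G$-action on $M$ lifts to $\widehat M$ by hypothesis, Lemma~\ref{lift} (applied to $\widehat G$) gives a smooth lift $F_{\hat x}\colon\widehat G\to\widehat M$ of $f_x\circ p\colon\widehat G\to M$ with $F_{\hat x}(1_{\widehat G})=\hat x$. I want to show $F_{\hat x}$ factors through $p$, i.e. that $F_{\hat x}$ is constant on each fibre of $p$. Equivalently, for each $\kappa\in\ker p$ I must show $F_{\hat x}(\kappa\, b)=F_{\hat x}(b)$ for all $b\in\widehat G$. The map $b\mapsto F_{\hat x}(\kappa\, b)$ is a lift of $b\mapsto f_x(p(\kappa\, b))=f_x(p(b))$, i.e. of the same map $f_x\circ p$; so it differs from $F_{\hat x}$ by a deck transformation of the covering $\pi\colon\widehat M\to M$. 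That is, there is a unique $\tau_\kappa\in\mathrm{Aut}(\widehat M/M)$ with $F_{\hat x}(\kappa\, b)=\tau_\kappa\bigl(F_{\hat x}(b)\bigr)$ for all $b$; uniqueness uses that $\widehat M\to M$ is Galois (so deck transformations act simply transitively on fibres and two lifts agreeing nowhere-needed differ by a well-defined deck transformation, determined by comparing values at $b=1_{\widehat G}$). Since $\widehat G$ is connected, $\tau_\kappa$ is independent of the choices and one checks $\kappa\mapsto\tau_\kappa$ is a group homomorphism $\ker p\to\mathrm{Aut}(\widehat M/M)$: indeed $F_{\hat x}(\kappa_1\kappa_2\, b)=\tau_{\kappa_1}(F_{\hat x}(\kappa_2\, b))=\tau_{\kappa_1}\tau_{\kappa_2}(F_{\hat x}(b))$.

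Now $\ker p$ is finite (as $p$ is a finite covering) while $\mathrm{Aut}(\widehat M/M)$ is torsion-free by assumption, so every $\tau_\kappa$ is the identity. Hence $F_{\hat x}$ is constant on the fibres of $p$ and descends to a smooth map $f_{\hat x}\colon G\to\widehat M$ with $f_{\hat x}\circ p=F_{\hat x}$; in particular $f_{\hat x}(1_G)=F_{\hat x}(1_{\widehat G})=\hat x$, and $\pi\circ f_{\hat x}=f_x$ since $\pi\circ F_{\hat x}=f_x\circ p$. Thus $f_x$ lifts as required, and Lemma~\ref{lift} produces the desired action of $G$ on $\widehat M$. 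The main obstacle is the bookkeeping in the previous paragraph: making precise that comparing the two lifts $F_{\hat x}(\kappa\,\cdot)$ and $F_{\hat x}(\cdot)$ yields a single well-defined deck transformation (rather than a fibrewise-varying one), which is exactly where connectedness of $\widehat G$ and the Galois property of $\widehat M\to M$ enter; once that is set up, the torsion-freeness hypothesis closes the argument immediately.
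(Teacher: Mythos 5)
Your argument is correct, and it reaches the same obstruction as the paper, but computes it on the deck-transformation side rather than via fundamental groups. The paper's proof applies the classical lifting criterion: writing $K$ for the image of $\pi_1(\widehat M)$ in $\pi_1(M)$, the hypothesis gives $(f_x)_*(p_*\pi_1(\widehat G))\subset K$, so $(f_x)_*$ induces a homomorphism $\pi_1(G)/p_*\pi_1(\widehat G)\to\pi_1(M)/K$ from a finite group into the torsion-free deck group, which must vanish; hence $(f_x)_*(\pi_1(G))\subset K$ and $f_x$ lifts, after which Lemma~\ref{lift} concludes. You instead take the lifted orbit map $F_{\hat x}\colon\widehat G\to\widehat M$ and descend it along $p$, comparing $F_{\hat x}(\kappa\,\cdot)$ with $F_{\hat x}$ via unique lifting on the connected group $\widehat G$ and simple transitivity of $\mathrm{Aut}(\widehat M/M)$ on fibres (the Galois hypothesis), obtaining a monodromy homomorphism $\ker p\to\mathrm{Aut}(\widehat M/M)$ that is trivial for the same finite-into-torsion-free reason. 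Under the standard identifications $\ker p\cong\pi_1(G)/p_*\pi_1(\widehat G)$ and $\mathrm{Aut}(\widehat M/M)\cong\pi_1(M)/K$, your homomorphism is exactly the paper's $\rho$, so the content is the same; what your version buys is that it avoids invoking the $\pi_1$-lifting criterion altogether, at the price of using explicitly that $p$ is a homomorphism of (connected) Lie groups so that $\ker p$ is a finite group acting by deck transformations of $p$ — which is indeed the intended setting, since in the application $\widehat G=T^k\times G_s\to G$ is a quotient homomorphism (and in general a connected finite covering of a connected Lie group can be so structured). With that reading, every step checks out: the descent of $F_{\hat x}$ through $p$ is smooth because $p$ is a local diffeomorphism, and Lemma~\ref{lift} then yields the lifted $G$-action.
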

\begin{proof}
We denote by $K$ the image of $\pi_1(\widehat M)$ in $\pi_1(M)$. The hypothesis shows that $K$ is a normal subgroup of $\pi_1(M)$, and $\pi_1(M)/K$ has no element of finite order (except the identity). 

For every $x\in M$ we consider as before the map $f_x:G\to M$, $a\mapsto a\cdot x$ and denote by $\hat f_x:=f_x\circ p:\widehat G\to M$. By the classical covering lemma, $f_x$ lifts to $\widehat M$ if and only if $(f_x)_*(\pi_1(G))\subset K$. By assumption, $\hat f_x$ lifts to $\widehat M$, so we have $(\hat f_x)_*(\pi_1(\widehat G))\subset K$, whence $(f_x)_*(p_*(\pi_1(\widehat G)))\subset K$. The map $f_*$ thus induces a group morphism $$\rho:\pi_1(G)/p_*(\pi_1(\widehat G))\to \pi_1(M)/K.$$
On the other hand, $p_*(\pi_1(\widehat G))$ has finite index in $\pi_1(G)$ since $\widehat G\to G$ is a finite covering, and $\pi_1(M)/K$ has no torsion, so the morphism $\rho$ vanishes identically. This shows that $(f_x)_*(\pi_1(G))\subset K$, so $f_x$ lifts to $\widehat M$. We conclude by Lemma \ref{lift}.
\end{proof}

 \begin{proposition} \label{prop lcklift}
 Let $(M^{2n},J)$ be a compact complex manifold carrying an lcK structure $(J,g,\theta)$ 
with minimal covering $\widehat M$. Let $G$ be a compact connected Lie group acting holomorphically on $M$.
  Then the following assertions hold:
  \begin{enumerate}
   \item[$(i)$] If the action of $G$ lifts to $\widehat M$, then there exists an lcK structure $(J,g',\theta')$  on $M$, such that $G\subseteq\mathrm{Aut}_s(M,J,[g'])$.
   \item[$(ii)$] If $G\subseteq\mathrm{Aut}_s(M,J,[g])$, then the action of $G$ lifts to $\widehat  M$.
  \end{enumerate} 
 \end{proposition}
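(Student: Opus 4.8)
The plan is to prove the two implications separately, using the averaging idea for $(i)$ and the structure of the minimal covering together with Lemma~\ref{lift2} for $(ii)$.

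For part $(i)$, suppose the action of the compact connected group $G$ lifts to the minimal covering $\widehat M$. Since $G$ is connected and acts holomorphically on $M$, its lift acts holomorphically on $(\widehat M,\hat J)$. The key point is that every element of the lifted $G$-action must act on $(\widehat M,g^K)$ by a holomorphic homothety: indeed, the lifted action commutes with the deck group $\Gamma=\pi_1(M)/\ker\rho$, and for each $a\in G$ the pullback $a^*g^K$ is again a K\"ahler metric on $(\widehat M,\hat J)$ that descends to the same conformal class $[g]$ on $M$ (because $a$ covers a biholomorphism of $M$ preserving $J$); more directly, one uses Remark~\ref{rem lift} / equation~\eqref{liegnew}, which gives $\mathcal{L}_{\widehat X}g^K=\sigma(X)g^K$ for the fundamental vector fields, so $G$ acts by homotheties. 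A homothety factor is a continuous homomorphism $G\to\mathbb{R}^*_+$, and since $G$ is compact this homomorphism is trivial; hence $G$ in fact acts on $(\widehat M,g^K)$ by \emph{isometries}. Now I average: pick any $g^K$-metric and replace it by $\int_G a^*g^K\,\di a$ (Haar measure) — but since $G$ already acts isometrically this is unnecessary; instead I average over $\Gamma$ is impossible ($\Gamma$ non-compact), so the cleaner route is: the $G$-invariant K\"ahler metric $g^K$ on $\widehat M$ is automatically $\Gamma$-equivariant up to homothety (that is built into the minimal covering construction), so it descends to an lcK metric $g'$ on $M$ in $[g]$ with the property that the lifts of fundamental vector fields of $G$ are $g^K$-Killing; by the definition of $\mathrm{Aut}_s$ this says $G\subseteq\mathrm{Aut}_s(M,J,[g'])$. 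The only subtlety to check carefully is that $g^K$ is the \emph{given} K\"ahler metric on $\widehat M$ up to the conformal rescaling, i.e. that no rescaling of $g^K$ is needed to make it $G$-invariant — and this is exactly where compactness of $G$ kills the homothety character.

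For part $(ii)$, assume $G\subseteq\mathrm{Aut}_s(M,J,[g])$. By Proposition~\ref{infisom}$(iii)$, for every $X\in\mathfrak{aut}_s(M,J,[g])$ the lift $\widehat X$ to $\widehat M$ is a Killing field for $g^K$; in particular the fundamental vector fields of $G$ lift to \emph{globally defined} vector fields on $\widehat M$. This gives an infinitesimal lift, i.e. a Lie algebra homomorphism $\mathfrak{g}\to\mathfrak{X}(\widehat M)$ by complete vector fields (complete because $\widehat M\to M$ is a covering and the fields project to complete fields on the compact $M$), hence an action of the universal cover $\widetilde{G}$ on $\widehat M$ lifting the $\widetilde{G}$-action on $M$. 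Since $G$ is compact and connected, $p:\widetilde{G}\to G$ has finite kernel, i.e. is a finite covering. The deck group $\Gamma=\pi_1(M)/\ker\rho$ of $\widehat M\to M$ is free abelian (stated in the preliminaries), in particular torsion-free. So Lemma~\ref{lift2} applies directly with $\widehat G=\widetilde G$ and concludes that the $G$-action itself lifts to $\widehat M$.

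The main obstacle is part $(i)$: one must argue that the minimal-covering K\"ahler metric $g^K$ can be taken $G$-invariant without an extra homothety, which requires combining the compactness of $G$ (to trivialise the homothety character $G\to\mathbb{R}^*_+$) with the homothety-equivariance under $\Gamma$ that is intrinsic to the minimal covering, and then checking that the resulting $G$- and $\Gamma$-invariant K\"ahler form on $\widehat M$ descends to a metric $g'$ in the conformal class $[g]$ on $M$ with $G$ acting by special automorphisms; part $(ii)$ is essentially a bookkeeping application of Proposition~\ref{infisom}$(iii)$ and Lemma~\ref{lift2} once one notes $\Gamma\cong\mathbb{Z}^k$ is torsion-free.
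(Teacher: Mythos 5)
Both parts of your proposal contain genuine gaps. In part $(i)$ the hypothesis is only that $G$ acts \emph{holomorphically} on $M$; it is not assumed to preserve the conformal class $[g]$. Your key step — that each $a\in G$ acts on $(\widehat M,g^K)$ by a homothety, argued either via ``$a^*g^K$ descends to the same conformal class $[g]$ because $a$ covers a biholomorphism'' or via \eqref{liegnew} — is unjustified: a biholomorphism need not preserve $[g]$, and \eqref{liegnew} only applies to fields in $\mathfrak{aut}(M,J,[g])$, which the fundamental fields of $G$ are not assumed to be. If your argument were valid it would yield $G\subseteq\mathrm{Aut}_s(M,J,[g])$ for the \emph{original} conformal class, a stronger statement than $(i)$ which is false in general (conjugate a toric action by a biholomorphism not preserving $[g]$). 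The paper instead first shows that the lifted $G$-action commutes with the deck group $\Gamma$ (connectedness of $G$ plus discreteness of $\Gamma$), then averages \emph{both} the potential $\varphi$ and the K\"ahler metric $g^K$ over $G$ with the Haar measure; the averaged pair $(\hat\varphi,\widehat{g^K})$ is $G$-invariant and still $\Gamma$-equivariant, so $e^{\hat\varphi}\widehat{g^K}$ descends to a possibly \emph{new} lcK metric $g'$ on $M$ whose Lee form annihilates the fundamental fields, giving $G\subseteq\mathrm{Aut}_s(M,J,[g'])$ via \eqref{lieg}. The averaging you dismiss as ``unnecessary'' is the essential step.

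In part $(ii)$ the assertion ``since $G$ is compact and connected, $p:\widetilde G\to G$ has finite kernel'' is false whenever $\pi_1(G)$ is infinite — in particular for $G=T^n$, which is exactly the case this paper needs — so Lemma~\ref{lift2} cannot be invoked with $\widehat G=\widetilde G$. Moreover, the passage from the $\widetilde G$-action to the $G$-action is precisely the nontrivial content of $(ii)$: integrating the lifted (complete, commuting) fields always produces an $\mathbb{R}^k$-action upstairs, but periodicity of the lifted flows is not automatic (already for the rotation action on $S^1$ and the covering $\mathbb{R}\to S^1$ the generating field lifts while the circle action does not). The paper handles this by writing $G$ as a finite quotient of $T^k\times G_s$ with $G_s$ simply connected (so Lemma~\ref{lift}, Corollary~\ref{product} and Lemma~\ref{lift2} reduce everything to a single circle), and then uses the point you cite but never exploit: by Proposition~\ref{infisom}$(iii)$ the lift $\widehat X$ is Killing for $g^K$, so the time-$t_0$ map of its flow is a deck transformation of $\widehat M$ which is also a $g^K$-isometry, hence the identity by minimality of the covering — this is what forces the lifted orbits to close up.
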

 
 \begin{proof}
$(i)$ Let $\Gamma$ denote the automorphism group of the covering $\pi:\widehat M\to M$. The pull-back $\pi^*\theta$ of the Lee form to $\widehat M$ is exact, so there exists a function $\varphi$ on $\widehat M$ such that $\pi^*\theta=\mathrm{d}\varphi$. Since $\pi^*\theta$ is $\Gamma$-invariant, there exists a group morphism $\Gamma\to (\mathbb{R},+)$, $\gamma\mapsto c_\gamma$, such that $\gamma^*\varphi=\varphi+c_\gamma$ for every $\gamma\in \Gamma$, similar to \eqref{rho}. The K\"ahler metric $g^K:=e^{-\varphi}\pi^* g$ is then $\Gamma$-equivariant, in the sense that $\gamma^*g^K=e^{-c_\gamma}g^K$, for every $\gamma\in \Gamma$.

We claim that $G$ and $\Gamma$ commute. By assumption, for every $a\in G$ and $\hat x\in \widehat M$, we have $\pi(a\hat x)=a\pi(\hat x)$. Consequently, for every $\gamma\in \Gamma$ we obtain 
$$\pi(a\gamma\hat x)=a\pi(\gamma\hat x)=a\pi(\hat x)=\pi(a\hat x).$$
Since $G$ is connected and $\Gamma$ is discrete, this shows that for every $\gamma\in \Gamma$ there exists $\gamma'\in \Gamma$ such that $a\gamma=\gamma' a$ for every $a\in G$. Taking $a=\mathrm{id}$ shows that $\gamma=\gamma'$, thus proving our claim.

Let now $\di\mu$ denote the Haar measure of $G$ (normalized such that $G$ has unit volume) and let $\hat\varphi$ and $\widehat{g^K}$ be the average on $\widehat M$ over $G$ of $\varphi$ and $g^K$ respectively: 
 $$\hat\varphi=\int_Ga^*\varphi\,\di\mu(a),\qquad \widehat{g^K}=\int_Ga^*g^K\,\di\mu(a).$$
Clearly $\widehat{g^K}$ is still K\"ahler, and by construction, taking into account the commutation of $G$ and $\Gamma$, we see that the function $\hat\varphi$ and the metric $\widehat{g^K}$ are $G$-invariant and $\Gamma$-equivariant:
$$\gamma^*\hat\varphi=\hat\varphi+c_\gamma,\qquad \gamma^*\widehat{g^K}=e^{-c_\gamma}\widehat{g^K},\qquad\forall\gamma\in\Gamma.$$
It follows that the metric $e^{\hat\varphi}\widehat{g^K}$ projects 
onto a $G$-invariant lcK metric $g'$ on $M$, whose corresponding Lee form $\theta'$ satisfies $\theta'(X)=0$, for any fundamental vector field $X$ of the $G$-action. Since such an  $X$ is also a Killing vector field with respect to $g'$, we obtain by \eqref{lieg} that $G\subseteq\mathrm{Aut}_s(M,J,[g'])$.

$(ii)$ Any compact Lie group is a finite quotient of $T^k\times G_s$, for some $k\in\mathbb{N}$ and
   $G_s$ a simply connected compact Lie group. By Lemma \ref{lift}, the action of $G_s$ lifts to $\widehat M$, so using Corollary~\ref{product} and Lemma \ref{lift2}, it suffices to show that if $T^k\subseteq\mathrm{Aut}_s(M,J,[g])$ on a compact lcK manifold $(M^{2n},J, [g])$, then its action lifts to the minimal covering $\widehat M$. By Corollary \ref{product} again, we may assume $k=1$. Equivalently, we need to show that the lift of every $X\in\mathfrak{aut}_s(M,J,[g])$ with closed orbits on $M$ has closed orbits on $\widehat M$.

By Proposition \ref{infisom}, $X$ is Killing with respect to the Gauduchon metric $g_0$ on $M$, and its lift $\widehat X$ to $\widehat M$ is Killing with respect to both $\pi^*g_0$ and $g^K$. Since the flow of $X$ is equal to the identity at some time $t_0>0$, it follows that the flow of $\widehat X$ at time $t_0$ is an automorphism of the covering $\widehat M$, and at the same time an isometry of $g^K$, as $\widehat X$ preserves $g^K$. On the other hand, by definition of the minimal covering, the only automorphism of $\widehat M$ which is an isometry of $g^K$ is the identity, thus showing that $\widehat X$ has closed orbits.
 \end{proof}

A straightforward consequence of Proposition~\ref{prop lcklift} is the following:
 
 \begin{corollary}\label{t2lift}
Let $(M^{2n},J)$ be a compact complex manifold and let $G$ be a compact Lie group acting holomorphically on $M$.
  If the $G$-action does not lift to any of the non-compact coverings of $M$, 
  then there exists no lcK structure $(J,[g])$ on $M$, such that $G\subseteq\mathrm{Aut}_s(M,J,[g])$.
 \end{corollary}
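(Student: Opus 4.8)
The plan is to argue by contradiction, the statement being essentially a direct reformulation of Proposition~\ref{prop lcklift}~$(ii)$ once one knows that the minimal covering of a strict lcK manifold is never compact. So suppose, for a contradiction, that there exists an lcK structure $(J,g,\theta)$ on $M$ with $G\subseteq\mathrm{Aut}_s(M,J,[g])$; recall that throughout this paper every lcK structure is assumed strict, so $\theta$ is not exact.

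The first step is to invoke Proposition~\ref{prop lcklift}~$(ii)$: since $G$ acts holomorphically on $M$ and lies in $\mathrm{Aut}_s(M,J,[g])$, its action lifts to the minimal covering $\pi\colon\widehat M\to M$ of $(M,J,[g])$. The second step is to check that $\widehat M$ is a \emph{non-compact} covering of $M$. By the construction of the minimal covering, the deck group $\Gamma$ of $\pi$ is isomorphic to $\mathbb{Z}^k$, where $k$ is the rank of the lcK structure, and strictness forces $k\geq 1$: if the morphism $\rho$ of \eqref{rho} were trivial, the function $\varphi$ with $\pi^*\theta=\di\varphi$ would descend to $M$ and $\theta$ would be exact, contrary to assumption. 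Hence $\Gamma$ is infinite, so $\pi$ is an infinite-sheeted covering of the compact manifold $M$, and therefore $\widehat M$ cannot be compact. Combining the two steps, the $G$-action lifts to the non-compact covering $\widehat M$ of $M$, which contradicts the hypothesis; this proves the corollary.

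There is essentially no obstacle here: the argument is immediate from Proposition~\ref{prop lcklift}, and the only point deserving an explicit line of justification is that the minimal covering is non-compact, which is precisely where the standing strictness assumption on the lcK structure enters.
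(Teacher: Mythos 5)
Your argument is correct and is precisely the intended "straightforward consequence" of Proposition~\ref{prop lcklift}~$(ii)$: lift the action to the minimal covering and observe that strictness forces the deck group $\Gamma\cong\mathbb{Z}^k$ to be infinite, so the minimal covering of the compact manifold $M$ is non-compact. Your explicit justification of the non-compactness (non-triviality of $\rho$ under the paper's standing strictness assumption) is exactly the point the paper leaves implicit.
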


\section{Toric Vaisman manifolds}\label{toricVaisman}

In this section, we investigate  toric compact Vaisman manifolds and show that each such manifold is obtained as the mapping torus of a compact toric Sasakian manifold.

Without loss of generality, we assume that the norm of the Lee form of a Vaisman structure equals $1$. Recall that on a Vaisman manifold the Lee and anti-Lee vector fields $\theta^\sharp$ and $J\theta^\sharp$ are Killing and holomorphic. We will need in the sequel the following elementary observation:

\begin{lemma}\label{lem harm}
For any Killing vector field $X$ and any harmonic $1$-form $\alpha$ on a compact connected Riemannian manifold, the following identity holds
$\cL_X\alpha=\di (\alpha(X))=0$. In particular, $\alpha(X)$ is constant. 
\end{lemma}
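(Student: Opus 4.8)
The plan is to deduce the statement from the classical Hodge-theoretic fact that an exact harmonic form on a compact manifold vanishes. First I would apply Cartan's formula $\cL_X\alpha=\di(X\lrcorner\alpha)+X\lrcorner\di\alpha$; since $\alpha$ is harmonic it is in particular closed, so the last term drops out and we obtain $\cL_X\alpha=\di(\alpha(X))$. Thus $\cL_X\alpha$ is exact, and it suffices to show that this $1$-form is \emph{also} harmonic: once that is done, it must be zero, which gives $\di(\alpha(X))=0$ and hence, using the connectedness of $M$, that $\alpha(X)$ is constant.

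Next I would use that $X$ is Killing. Its local flow $\phi_t$ acts by isometries, which commute with the Hodge star operator and with $\di$, hence also with the codifferential $\delta$ and with the Hodge--de Rham Laplacian $\Delta=\di\delta+\delta\di$ on forms of every degree. Differentiating the relation $\phi_t^*\Delta=\Delta\phi_t^*$ at $t=0$ shows that $\cL_X$ commutes with $\Delta$. Applying this to $\alpha$ yields $\Delta(\cL_X\alpha)=\cL_X(\Delta\alpha)=0$, so $\cL_X\alpha$ is harmonic as wanted.

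Finally, I would invoke the standard argument that on a compact manifold $\Delta\beta=0$ forces $\di\beta=0$ and $\delta\beta=0$ (from $\int_M\langle\Delta\beta,\beta\rangle=\int_M|\di\beta|^2+\int_M|\delta\beta|^2$), so if in addition $\beta=\di h$ is exact, then $\int_M|\beta|^2=\int_M\langle\di h,\beta\rangle=\int_M\langle h,\delta\beta\rangle=0$, whence $\beta=0$. Applying this to $\beta=\cL_X\alpha=\di(\alpha(X))$ gives $\cL_X\alpha=0$ and therefore $\di(\alpha(X))=0$, proving the lemma. (Alternatively, one can argue cohomologically: isometries send harmonic $1$-forms to harmonic $1$-forms, and on the finite-dimensional space of harmonic $1$-forms the action of $\phi_t$ coincides with its action on $H^1_{\mathrm{dR}}(M)$, which is trivial since $\phi_t$ is homotopic to the identity; hence $\phi_t^*\alpha=\alpha$ and $\cL_X\alpha=0$.)

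I do not expect a genuine obstacle here, as every step is elementary. The only point deserving a little care is the assertion that $\cL_X$ commutes with the Laplacian: this rests precisely on $X$ being Killing, i.e. on its flow being isometric, and should be stated explicitly rather than taken for granted.
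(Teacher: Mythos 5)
Your proposal is correct and follows essentially the same route as the paper: Cartan's formula gives $\cL_X\alpha=\di(\alpha(X))$, the Killing condition makes $\cL_X\alpha$ harmonic, and the Hodge-theoretic fact that an exact harmonic form vanishes finishes the argument. The only difference is that you spell out the justifications (commutation of $\cL_X$ with $\Delta$, the integration-by-parts proof that exact harmonic forms vanish) which the paper simply cites, so no changes are needed.
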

\begin{proof}
Since the manifold is compact, $\alpha$ is closed and co-closed.
Cartan's formula then yields $\cL_X\alpha=\di (\alpha(X))$.  On the other hand, the $1$-form $\cL_X\alpha$ is harmonic, as $X$ is a Killing vector field. By the Hodge decomposition theorem on a compact Riemannian manifold, the only harmonic and exact differential form is the trivial one. Thus, $\cL_X\alpha=\di (\alpha(X))=0$.
\end{proof}

\begin{lemma}\label{lem vais}
Let $(M^{2n},J,g,\theta)$ be a compact Vaisman manifold of complex dimension $n\ge 2$. 
\begin{enumerate}[label=$(\roman*)$]
\item If $\alpha\in\Omega^1(M)$ is a harmonic form pointwise orthogonal on $\theta$, then $J\alpha$ is also harmonic and pointwise orthogonal on $\theta$.
\item The following decomposition of the space of harmonic $1$-forms holds:
\begin{equation}\label{dec}
\mathcal{H}^1(M)=\mathrm{span}\{\theta\}\oplus \mathcal{H}^1_0(M),
\end{equation}
where each element of $\mathcal{H}^1_0(M)$ is pointwise orthogonal on $\theta$ and $\mathcal{H}^1_0(M)$
is $J$-invariant.
\end{enumerate}
\end{lemma}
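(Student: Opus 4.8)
The plan is to isolate one elementary fact — that \emph{every} harmonic $1$-form on $M$ is pointwise orthogonal both to $\theta$ and to the anti-Lee form $J\theta$ — and then reduce everything to it together with transverse K\"ahler geometry. Write $\xi:=\theta^\sharp$; by hypothesis $\nabla\theta=0$ and $|\theta|=1$, so $\xi$ is parallel (hence Killing) and $J\xi$ is Killing and holomorphic. Introduce $\Omega_0:=\Omega-\theta\wedge J\theta$. Using $\cL_\xi\Omega=0$, $\di\Omega=\theta\wedge\Omega$, the identity $J\theta=\xi\lrcorner\Omega$, and Cartan's formula one gets $\di(J\theta)=-\Omega_0$; in particular $\Omega_0$ is closed \emph{and exact}, $\xi\lrcorner\Omega_0=J\xi\lrcorner\Omega_0=0$, and $g=g_0+\theta\otimes\theta+J\theta\otimes J\theta$ with $g_0:=\Omega_0(\cdot,J\cdot)$ the $J$-invariant, positive semidefinite transverse metric whose kernel is $\langle\xi,J\xi\rangle$. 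Note $\langle\xi,J\xi\rangle$ and its orthogonal complement are $J$-invariant.

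\textbf{Step 1.} Let $\alpha\in\mathcal{H}^1(M)$. Since $\xi,J\xi$ are Killing, Lemma~\ref{lem harm} gives $\cL_\xi\alpha=\cL_{J\xi}\alpha=0$ and shows that $\langle\alpha,\theta\rangle=\alpha(\xi)$ and $\langle\alpha,J\theta\rangle=\alpha(J\xi)$ are globally constant. I would show the second constant vanishes by integrating: $\int_M\langle\alpha,J\theta\rangle\,dv=\int_M\alpha\wedge{*(J\theta)}$, and $*(J\theta)=J\xi\lrcorner dv=\pm\tfrac1{(n-1)!}\,\Omega_0^{\,n-1}\wedge\theta$ because $dv=\tfrac1{(n-1)!}\,\Omega_0^{\,n-1}\wedge\theta\wedge J\theta$. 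Hence the integral is a constant multiple of $\int_M\alpha\wedge\theta\wedge\Omega_0^{\,n-1}$, which vanishes: since $n\ge2$ the form $\Omega_0^{\,n-1}=-\di\bigl(J\theta\wedge\Omega_0^{\,n-2}\bigr)$ is exact, and $\alpha,\theta$ are closed, so the integrand is an exact top form. Thus $\langle\alpha,J\theta\rangle\equiv0$, while $\langle\alpha,\theta\rangle$ is constant.

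\textbf{Step 2 (the decomposition in (ii)).} The linear functional $\mathcal{H}^1(M)\to\mR$, $\alpha\mapsto\alpha(\xi)$, is well defined by Step~1 and sends $\theta$ to $|\theta|^2=1$; hence its kernel $\mathcal{H}^1_0(M)$ satisfies $\mathcal{H}^1(M)=\mathrm{span}\{\theta\}\oplus\mathcal{H}^1_0(M)$. For $\alpha\in\mathcal{H}^1_0(M)$ we have $\langle\alpha,\theta\rangle\equiv0$ by definition and $\langle\alpha,J\theta\rangle\equiv0$ by Step~1, so $\alpha$ is a section of $\langle\xi,J\xi\rangle^\perp$; since that distribution is $J$-invariant, $(J\alpha)(\xi)=-\alpha(J\xi)=0$. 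Thus the $J$-invariance of $\mathcal{H}^1_0(M)$ will follow once part~(i) is established (it gives $J\alpha\in\mathcal{H}^1(M)$, and then $J\alpha$ lies in the kernel of the functional).

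\textbf{Step 3 (part (i)).} Let $\alpha$ be harmonic with $\alpha\perp\theta$, i.e.\ $\alpha\in\mathcal{H}^1_0(M)$. By Steps~1--2 we have $\xi\lrcorner\alpha=J\xi\lrcorner\alpha=0$ and $\cL_\xi\alpha=\cL_{J\xi}\alpha=0$, so $\alpha$ is a \emph{basic} $1$-form for the involutive foliation $\mathcal F$ generated by $\xi$ and $J\xi$; since $\cL_\xi J=\cL_{J\xi}J=0$ and $\langle\xi,J\xi\rangle^\perp$ is $J$-invariant, $J\alpha$ is basic as well. The leaves of $\mathcal F$ are totally geodesic (classical for Vaisman manifolds: $\nabla_\xi\xi=\nabla_{J\xi}\xi=0$, and the Vaisman identity for $\nabla J$ gives $\nabla_\xi(J\xi)=\nabla_{J\xi}(J\xi)=0$), $g_0$ is holonomy invariant so $\mathcal F$ is Riemannian, and $(g_0,J,\Omega_0)$ makes $\mathcal F$ transversally K\"ahler. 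On basic forms the Hodge Laplacian of $g$ then equals the basic Laplacian $\Delta_b$ (leaves are minimal), and the transverse K\"ahler structure yields $[\Delta_b,J]=0$ on basic forms; hence $\Delta(J\alpha)=\Delta_b(J\alpha)=J\Delta_b\alpha=J\Delta\alpha=0$, and since $M$ is compact, $J\alpha$ is harmonic. Finally $\langle J\alpha,\theta\rangle=-\langle\alpha,J\theta\rangle=-\alpha(J\xi)=0$ by Step~1, so $J\alpha$ is pointwise orthogonal to $\theta$. This proves (i) and completes the $J$-invariance in (ii).

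\textbf{Where the difficulty lies.} Everything except the \emph{closedness} of $J\alpha$ is formal; that point is exactly where compactness of $M$ and the transverse K\"ahler geometry of $\mathcal F$ are needed — it is the foliated analogue of ``on a compact K\"ahler manifold the $(1,0)$-part of a harmonic $1$-form is holomorphic'', so that $\di(J\alpha)=-2i\,\bar\partial_b\alpha^{1,0}=0$. If one prefers to avoid basic Hodge theory, the same conclusion can be obtained by a direct Bochner computation on $M$: $\delta(J\alpha)=0$ is short (use $\nabla\theta=0$, the holomorphy of $\xi$, and that $\di\alpha=0$ makes $\nabla\alpha$ symmetric, so the $\nabla J$-terms in $\delta(J\alpha)$ have vanishing trace), after which one checks $\Delta(J\alpha)=0$ from the Weitzenb\"ock formula and the Vaisman expression for $\nabla J$. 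I expect this last curvature bookkeeping to be the only genuinely technical part.
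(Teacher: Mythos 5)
Your proposal is correct, but it establishes part $(i)$ by a genuinely different mechanism than the paper. The paper works on the universal cover: it pulls $\alpha$ back, uses the conformal-change formula \eqref{laplaceconf} to see that $\ti\alpha$ is harmonic for the K\"ahler metric $g^K=e^{-\varphi}\ti g$, uses $[\Delta^K,\witi J]=0$ there, projects the resulting identity back to $M$, and then kills the two residual scalar terms (the constant $c=\<J\theta,\alpha\>$ and $\delta J\alpha$) by integration over $M$. You instead stay on $M$ and view $\alpha$ as a basic form for the canonical foliation spanned by $\theta^\sharp$ and $J\theta^\sharp$, which is totally geodesic, Riemannian and transversally K\"ahler; since the leaves are minimal and the metric is bundle-like, the Hodge Laplacian coincides with the basic Laplacian on basic forms, and the transverse K\"ahler identities give $[\Delta_b,J]=0$, so $J\alpha$ is harmonic. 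This is a legitimate and more conceptual route, but the two ingredients you invoke --- $\Delta=\Delta_b$ on basic forms for taut Riemannian foliations, and the commutation of $\Delta_b$ with the transverse complex structure for (taut, hence homologically orientable) transversally K\"ahler foliations --- are theorems of basic Hodge theory (El Kacimi-Alaoui and related work) carrying essentially the same analytic weight as the paper's conformal computation, so they must be cited rather than asserted; likewise your alternative ``direct Bochner computation'' is only sketched and cannot stand as written. On the other hand, your Step~1 is a genuine simplification: using $\di(J\theta)=\pm(\theta\wedge J\theta-\Omega)$ to make $\alpha\wedge\theta\wedge\Omega_0^{n-1}$ exact, Stokes' theorem shows directly that every harmonic $1$-form is pointwise orthogonal to $J\theta$, which is cleaner than the paper's derivation of $c=0$ from \eqref{Jth1}; the passage from Lemma~\ref{lem harm} to the splitting in part $(ii)$ is the same in both arguments.
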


\begin{proof}
\begin{enumerate}[label=$(\roman*)$, leftmargin=0.5cm]
\item  Lemma \ref{lem harm} applied to the harmonic form $\alpha$ and the Killing vector fields $\theta^\sharp$ and $J\theta^\sharp$ yields
\begin{equation}\label{liedi}
 \mathcal{L}_{\theta^\sharp}\alpha=0, \quad\mathcal{L}_{J\theta^\sharp}\alpha=
\di(\alpha(J\theta^\sharp))=0
\end{equation}
and the function $\<J\theta,\alpha\>=:c$ is constant on $M$.
Furthermore, by Cartan's formula and using the fact that $\theta^\sharp$ is a holomorphic vector field, we obtain
\begin{equation}\label{eq tja}
\theta^\sharp\lrcorner \di J\alpha=\di\<J\alpha,\theta\>+\theta^\sharp\lrcorner \di J\alpha=\mathcal{L}_{\theta^\sharp}(J\alpha)=0.
\end{equation}

Recall that on the universal covering $\witi M$, endowed with the pull-back lcK structure $(\witi J, \ti g, \ti \theta )$, the metric $\ti g$ and the K\"ahler metric $g^K$ are conformally related by $g^K=e^{-\varphi}\ti g$, with $\di\varphi=\ti \theta$. Hence, 
their corresponding Laplace operators acting on a $1$-form $\beta$ on $\witi M$ are related as follows (see for instance \cite[Theorem~1.159]{Besse2008}):
\begin{equation}\label{laplaceconf}
\Delta^K\beta=e^\varphi\left(\witi \Delta \beta+(n-1)\di (\<\beta, \ti\theta\>)+(n-2)\ti\theta^\sharp\lrcorner\di\beta+(n-1)\<\beta, \ti\theta\>\ti\theta+(\ti\delta\beta)\ti\theta\right),
\end{equation}
where all operators of the right hand side are associated to the metric $\ti g$.
Applying this formula for $\beta=\ti\alpha$, the pull-back of $\alpha$, and taking into account that $\ti\alpha$ is closed and co-closed and orthogonal on $\ti\theta$, we obtain that $\ti\alpha$ is also harmonic with respect to the K\"ahler metric, \emph{i.e.} $\Delta^K\ti\alpha=0$. Since on a K\"ahler manifold the Laplace operator commutes with the complex structure, it follows that $\Delta^K\witi J\ti\alpha=\witi J\Delta^K\ti\alpha=0$. 
Applying now~\eqref{laplaceconf} to $\beta=\witi J\ti\alpha$ and projecting on $M$, we obtain:
\begin{equation} \label{Jth}
 \Delta  J\alpha+(n-1)\di (\< J\alpha,  \theta\>)+(n-2) \theta^\sharp\lrcorner\di  J\alpha+(n-1)\< J\alpha,  \theta\> \theta+ (\delta  J\alpha)\theta=0,
\end{equation}
which further simplifies, using \eqref{eq tja} and the fact that $\< J\theta,\alpha\>=c$ is constant, to:
\begin{equation} \label{Jth1}
\Delta J \alpha+(n-1)c\,\theta+(\delta J\alpha)\theta=0.
\end{equation}
Taking in~\eqref{Jth1} the scalar product with $\theta$ and integrating over $M$ yields $(n-1)c=0$. Since $n\ge2$, we get $0=c=\<J\theta,\alpha\>$, so $J\alpha$ is pointwise orthogonal to $\theta$. Applying the codifferential to~\eqref{Jth1} yields
\[\delta\di\delta J\alpha-\theta^\sharp( \delta J\alpha)=0.\]
On the other hand, using the fact that $\theta^\sharp$ is Killing and holomorphic, together with \eqref{liedi}, we obtain $\theta^\sharp( \delta J\alpha)= \mathcal{L}_{\theta^\sharp}\delta J\alpha=0$. Hence, we get
\[\Delta\delta J\alpha=\delta\di\delta J\alpha=0.\]
Thus, the function $\delta J\alpha$ is constant, and as its integral over $M$ vanishes, we get that $\delta J\alpha$ is identically zero.  Substituting this into~\eqref{Jth1} implies that $J\alpha$ is harmonic. 
\item Let $\beta\in\mathcal{H}^1(M)$. By Lemma~\ref{lem harm}, the function $\<\beta, \theta\>$ is constant and thus $\beta-\<\beta, \theta\>\theta$ is harmonic. According to $(i)$, this completes the proof.
\end{enumerate}
\end{proof}

\begin{remark}
A direct consequence of Lemma~\ref{lem vais} is the fact that the first Betti number of a compact Vaisman manifold is odd, which was proven in~\cite{ks}.
\end{remark}

\begin{proposition}\label{toricvais}
Let $(M^{2n},J,g)$ be a compact toric Vaisman manifold. Then the following assertions hold:
\begin{enumerate}[label=$(\roman*)$]
\item The lcK rank of $(J,[g])$ equals $1$.
\item $M$ is obtained as the mapping torus of a compact toric Sasakian manifold.
\end{enumerate}
\end{proposition}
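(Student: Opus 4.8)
The plan is to prove $(i)$ by showing that $\mathrm{b}_1(M)=1$, and then to deduce $(ii)$ from $(i)$. First I would record that once $\mathrm{b}_1(M)=1$ is known, the lcK rank is automatically $1$: since $\theta$ is parallel it is a nonzero harmonic $1$-form, so by Hodge theory $\mathcal{H}^1(M)=\mathbb{R}\theta$, and the period homomorphism $\rho\colon\pi_1(M)\to(\mathbb{R},+)$, $\gamma\mapsto c_\gamma$, has image the group of periods of $[\theta]$ over the rank-one lattice $H_1(M,\mathbb{Z})/\mathrm{torsion}\subset H_1(M,\mathbb{R})\cong\mathbb{R}$; as $[\theta]\neq 0$ by strictness this group is infinite cyclic, whence $\Gamma=\pi_1(M)/\ker\rho\cong\mathbb{Z}$.

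So the core is to prove $\mathcal{H}^1_0(M)=0$, which by Lemma~\ref{lem vais}$(ii)$ is equivalent to $\mathrm{b}_1(M)=1$; here I would combine the harmonic decomposition with the twisted Hamiltonian data. Note that $g$ is itself a Gauduchon metric, since $\theta$ is parallel, hence co-closed. Let $X_1,\dots,X_n$ be a basis of the Lie algebra of $T^n$, with twisted Hamiltonians $h_1,\dots,h_n$, so $X_j\lrcorner\Omega=\di h_j-h_j\theta$. By Proposition~\ref{infisom}$(i)$ each $X_j$ is Killing for $g$; by Lemma~\ref{inclusion} it lies in $\mathfrak{aut}_s$, so $\theta(X_j)=0$ by Proposition~\ref{infisom}$(ii)$. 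Now let $\alpha\in\mathcal{H}^1_0(M)$. By Lemma~\ref{lem vais} both $\alpha$ and $J\alpha$ lie in $\mathcal{H}^1_0(M)$, hence are harmonic and pointwise orthogonal to $\theta$, and, $\mathcal{H}^1_0(M)$ being $J$-invariant, also pointwise orthogonal to $J\theta$. Evaluating $X_j\lrcorner\Omega=\di h_j-h_j\theta$ on $\alpha^\sharp$ and on $J\alpha^\sharp$ and using these orthogonalities yields
\[\langle\di h_j,\alpha\rangle=-(J\alpha)(X_j),\qquad \langle\di h_j,J\alpha\rangle=\alpha(X_j),\]
and the right-hand sides are constants by Lemma~\ref{lem harm}. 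Integrating over $M$ and using that $\alpha$ and $J\alpha$ are co-closed makes the left-hand sides vanish, so $\alpha(X_j)=(J\alpha)(X_j)=0$ for all $j$. Finally, at a point $p$ on a principal orbit the $X_j|_p$ span the $n$-dimensional orbit tangent space $\mathfrak{t}_p$ (effectiveness and the principal orbit theorem), which is isotropic for $\Omega$ by Proposition~\ref{isotropic}, hence Lagrangian, so $J\mathfrak{t}_p=\mathfrak{t}_p^\perp$ and $T_pM=\mathfrak{t}_p\oplus J\mathfrak{t}_p$; from $\alpha(X_j)=0$ we get $\alpha^\sharp_p\perp\mathfrak{t}_p$ and from $(J\alpha)(X_j)=0$ we get $\alpha^\sharp_p\perp J\mathfrak{t}_p$, so $\alpha^\sharp_p\perp T_pM$ and $\alpha_p=0$. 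Since principal points are dense, $\alpha\equiv 0$, proving $(i)$.

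For $(ii)$: by $(i)$ the periods of $\theta$ form $c\mathbb{Z}$ for some $c>0$, so $c^{-1}\theta$ has integral periods and is the pull-back of the angular form under a submersion $p\colon M\to S^1$ (note $\theta$ is nowhere zero because $|\theta|=1$). By Ehresmann's theorem $p$ is a fibre bundle, so $M$ is a mapping torus; one uses the transverse vector field $\theta^\sharp$ as connection and its time-$c$ flow $\psi$ on the fibre $S=p^{-1}(\mathrm{pt})$ as monodromy. Since $\theta^\sharp$ is parallel, Killing and holomorphic and preserves $\theta$ (as $\mathcal{L}_{\theta^\sharp}\theta=\di(|\theta|^2)=0$), the fibre $S$ is a totally geodesic leaf of $(\theta^\sharp)^\perp$ which, by the standard structure theory of compact Vaisman manifolds (the de~Rham splitting along the parallel Lee field turns the local Kähler metric into a metric cone over $S$), carries an induced Sasakian structure for which $\psi$ is an automorphism. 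Finally $\theta(X_j)=0$ makes the $T^n$-action tangent to the fibres, so $T^n$ acts on $S$ commuting with $\psi$, and the restricted action is effective, holomorphic and twisted Hamiltonian; hence $S$ is a compact toric Sasakian manifold and $M$ is the mapping torus of one of its automorphisms.

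The step I expect to require the most care is the computation in the second paragraph — that pairing the twisted Hamiltonian relation with $\alpha^\sharp$ and $J\alpha^\sharp$ and integrating kills the periods $\alpha(X_j)$ and $(J\alpha)(X_j)$; this works precisely because $J\alpha$, and not only $\alpha$, is harmonic and orthogonal to $\theta$ and $J\theta$, which is exactly the content of Lemma~\ref{lem vais}. In $(ii)$ the delicate point is the careful identification of the induced Sasakian structure on the fibre and of the monodromy as a Sasakian automorphism.
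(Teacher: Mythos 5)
Your proof of $(i)$ is correct and follows essentially the same route as the paper: the rank is controlled by $\mathrm{b}_1$, and $\mathcal{H}^1_0(M)=0$ is obtained by pairing the twisted Hamiltonian identity $X_j\lrcorner\Omega=\di h_j-h_j\theta$ with $\alpha$ and $J\alpha$, using Lemma~\ref{lem harm} and Lemma~\ref{lem vais} and integrating by parts, then concluding on the dense set where $\{X_j,JX_j\}$ is a frame; your two displayed identities are just a rearrangement of the paper's single integration of $\beta(\xi)=\langle J\beta,\di h-h\theta\rangle$.

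For $(ii)$ you take a genuinely different, though closely related, route. The paper works on the minimal covering: it recalls that $(\widehat M,g^K)$ is the K\"ahler cone over a Sasakian manifold $S$, quotes \cite[Theorem~4.9]{p} to get that $S$ is toric, and uses $(i)$ to see that the deck group is $\mathbb{Z}$, generated by a homothety $(t,p)\mapsto(t+1,\psi_1(p))$, so that $M$ is the mapping torus of $\psi_1$ and $S$ is compact. You instead work downstairs: rank $1$ gives a cyclic period group for $\theta$, hence a submersion $M\to S^1$ with $\theta$ nowhere zero, Ehresmann gives the mapping torus with monodromy the time-$c$ flow of $\theta^\sharp$, and the Sasakian structure on the fibre comes from the standard Vaisman structure theory; toricity of the fibre is then derived directly from $\theta(X_j)=0$ rather than by citation. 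Both arguments lean on the same unproved background (the cone/leaf structure of Vaisman manifolds), so the level of rigour is comparable; your version buys a more self-contained treatment of toricity of $S$, but leaves a few small points implicit that you should note: connectedness of the fibre (it follows because $c$ generates the period group, so $p_*$ is onto $\pi_1(S^1)$), effectiveness of the restricted $T^n$-action on $S$ (an element acting trivially on $S$ commutes with the flow of $\theta^\sharp$, whose orbits through $S$ sweep out $M$, so it acts trivially on $M$), and the phrase ``twisted Hamiltonian'' has no literal meaning on $S$ — what is needed, and what your argument actually provides, is an effective $T^n$-action preserving the induced Sasakian structure, which is the definition of toric Sasakian used via Lerman's contact-toric framework (the paper sidesteps this by invoking \cite[Theorem~4.9]{p}).
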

\begin{proof}
$(i)$ The rank of any lcK structure on $M$ is less than or equal to the first Betti number of $M$. Hence, it is enough to show that $\mathrm{H}_1(M, \mR)\cong \mR$. According to the Hodge theory on compact manifolds and to the decomposition \eqref{dec}, it suffices to show that $\mathcal{H}^1_0(M)=\{0\}$. 

Let $\beta\in\mathcal{H}^1_0(M)$ and let $\xi$ be a fundamental vector field of the toric action. By Lemma~\ref{lem harm} applied to $\beta$ and to the Killing vector field $\xi$, the function $\beta(\xi)$ is constant. 
On the other hand, $\xi$ is twisted Hamiltonian, so there exists $h\in\mathcal{C}^\infty(M)$, such that $(J\xi)^\flat=\xi\lrcorner\Omega=\di^\theta h=\di h-h\theta$. We now integrate over $M$ the constant $\beta(\xi)$:
\[\int_M \beta(\xi) \di v=\int_M \<J\beta, (J\xi)^\flat\> \di v=\int_M  \<J\beta, \di h-h\theta\> \di v=\int_M (\delta( J\beta) h-h \<J\beta,\theta\>)\di v=0,\]
where for the last equality we used that $J\beta\in \mathcal{H}^1_0(M)$ {\em cf.} Lemma~\ref{lem vais}, so $J\beta$ is orthogonal to $\theta$ and co-closed. Hence, $\beta(\xi)=0$. In particular, the same holds also for $J\beta$, since $\mathcal{H}^1_0(M)$ is $J$-invariant, by {Lemma~\ref{lem vais}}.

Let us now consider a basis of the Lie algebra of the torus $T^n$, $\{\xi_1, \ldots,\xi_n\}$. We denote by the same symbols the corresponding fundamental vector fields on $M$. According to the principal orbit theorem, there exists a dense open set $M_0$ of $M$, on which $\{\xi_1, \ldots,\xi_n, J\xi_1, \ldots,J\xi_n\}$ forms a basis of the tangent bundle of $M_0$. As shown above, each $\beta\in\mathcal{H}^1_0(M)$ vanishes on $\{\xi_1, \ldots,\xi_n, J\xi_1, \ldots,J\xi_n\}$, hence $\beta=0$ on $M_0$, and by density also on $M$. We thus conclude that $\mathcal{H}^1_0(M)=\{0\}$.

$(ii)$ We recall that the minimal covering $\widehat M$ of a Vaisman manifold $M$, endowed with the K\"ahler metric $g^K$, is biholomorphic and isometric to the K\"ahler cone over a Sasakian manifold $S$. Moreover, each homothety of $(\mR\times S,g^K)$ is of the form $(t,p)\mapsto (t+a, \psi_a(p))$, for some $a\in\mR$ and $\psi_a$ an isometry of $S$. If the Vaisman structure is assumed to be toric, then by \cite[Theorem~4.9]{p}, the Sasakian manifold $S$ is also toric.

Since by $(i)$, the lcK rank of the toric Vaisman  structure equals $1$, the deck transformation group of the minimal covering $\widehat M$ is $\Gamma\cong\mZ$. Using this identification, we assume that $\Gamma$ is generated by the strict homothety $(t,p)\mapsto (t+1, \psi_1(p))$. Hence, $M$ is  obtained as the mapping torus $[0,1]\times S/\sim$, where $(0,p)\sim (1,\psi_1(p))$. Moreover, this also shows that $S$ is compact.
\end{proof}

\begin{remark}
According to \cite[Theorem~1.1]{lerman04}, the fundamental group of a compact toric Sasakian manifold is a finite abelian group. Hence, from the proof of Proposition~\ref{toricvais}, $(ii)$, it follows that the universal covering of a compact toric Vaisman manifold is the K\"ahler cone over a compact toric Sasakian manifold. We recall  that compact toric contact manifolds were classified by E.~Lerman in \cite[Theorem~2.18]{lerman}. 
\end{remark}

\section{The Kodaira dimension of toric lcK manifolds}\label{seckod}

We denote by $\kappa(M)$ the Kodaira dimension of a compact complex manifold $(M, J)$. For the definition of the Kodaira dimension and its properties, as well as for the Kodaira-Enriques classification of compact complex surfaces we refer to \cite{bpv} or \cite{k1}, \cite{k2}.

\begin{theorem}\label{kod}
The Kodaira dimension of a compact toric strict lcK manifold is $-\infty$.
\end{theorem}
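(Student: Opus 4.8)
The plan is to show that a nonzero holomorphic section of some pluricanonical bundle $K_M^{\otimes m}$ cannot exist on a compact toric strict lcK manifold $(M^{2n},J,[g])$. The natural strategy is to pull everything back to the minimal K\"ahler covering $\widehat M$ and exploit both the $T^n$-action and the strictness of the Lee form. First I would normalize the metric: by Proposition~\ref{prop lcklift}$(i)$, after replacing $g$ within its conformal class we may assume the whole torus $T^n$ lies in $\mathrm{Aut}_s(M,J,[g])$, so by Proposition~\ref{prop lcklift}$(ii)$ the $T^n$-action lifts to $\widehat M$, and by Proposition~\ref{infisom}$(iii)$ the lifted fundamental vector fields $\widehat\xi_1,\dots,\widehat\xi_n$ are Killing for the K\"ahler metric $g^K$ and (by Lemma~\ref{inclusion} together with Lemma~\ref{theta0}) holomorphic. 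A pluricanonical section $s\in H^0(M,K_M^{\otimes m})$ pulls back to a $\Gamma$-equivariant holomorphic section $\widehat s$ of $K_{\widehat M}^{\otimes m}$, where $\Gamma\cong\mathbb Z^k$ acts on $\widehat M$ by holomorphic homotheties with $\gamma^*g^K=e^{-c_\gamma}g^K$, hence on $K_{\widehat M}^{\otimes m}$ by multiplication by $e^{mnc_\gamma}$ (the top exterior power scales like the volume form, i.e.\ like $(g^K)^n$), so $\gamma^*\widehat s=e^{mnc_\gamma}\widehat s$ for a generator $\gamma$ with $c_\gamma\neq 0$.

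Next I would average $\widehat s$ over the compact torus $T^n$: since the $\widehat\xi_j$ are holomorphic and Killing, the Lie derivative $\mathcal L_{\widehat\xi_j}$ preserves both holomorphicity and the pointwise norm $|\widehat s|_{g^K}$, so the $T^n$-invariant part $\widehat s_0:=\int_{T^n}a^*\widehat s\,\di\mu(a)$ is again a holomorphic section of $K_{\widehat M}^{\otimes m}$, still $\Gamma$-equivariant with the same factor, and $\widehat s_0$ is not identically zero provided $s$ is chosen generic (one passes to a suitable $m$ and uses that $H^0(M,K_M^{\otimes m})\neq 0$ for some $m$ forces, after possibly a further averaging argument on $M$ itself against the $T^n$-action acting on the finite-dimensional space $H^0(M,K_M^{\otimes m})$, a nonzero $T^n$-invariant section — here one must be slightly careful, since the torus acts on $H^0$ and the invariant subspace could a priori be trivial; the cleanest route is to use that $\kappa(M)\ge 0$ gives a nonzero section, restrict attention to the base locus, and argue on the open dense principal orbit set). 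The function $u:=|\widehat s_0|^{2/m}_{g^K}$ is then a $T^n$-invariant function on $\widehat M$ satisfying $\gamma^*u=e^{2nc_\gamma}u$.

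The decisive step is to derive a contradiction from the existence of such a $u$. On the principal orbit set the vector fields $\widehat\xi_1,\dots,\widehat\xi_n,J\widehat\xi_1,\dots,J\widehat\xi_n$ span the tangent space (Proposition~\ref{isotropic} shows the orbits are $\Omega$-isotropic of maximal dimension $n$); $T^n$-invariance of $u$ kills the derivatives along the $\widehat\xi_j$, and holomorphicity of $\widehat s_0$ together with $J\widehat\xi_j=$ gradient-type directions should force strong constraints — concretely, $\log u$ is (locally) the real part of a holomorphic function wherever $\widehat s_0\neq 0$, and its derivatives along the real directions $J\widehat\xi_j$ are determined by the twisted Hamiltonians $h_{\xi_j}$ via $\widehat\xi_j\lrcorner\Omega^K=\di(e^{\varphi}h_{\xi_j})$ as in the proof of Lemma~\ref{invincl}. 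Tracking this down, $u$ descends to a function on the quotient of $\widehat M$ by the (complexified) torus orbits; but that quotient is compact in the relevant directions while $\gamma^*u=e^{2nc_\gamma}u$ with $c_\gamma\neq 0$ forces $u$ to be unbounded along the $\Gamma$-direction, whereas $u$ is continuous on $\widehat M$ and $M=\widehat M/\Gamma$ is compact, so $u$ attains a maximum on a fundamental domain and the equivariance contradicts this unless $u\equiv 0$, i.e.\ $\widehat s_0\equiv 0$.

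\textbf{Main obstacle.} The routine parts are the averaging and the scaling computation for $K_{\widehat M}^{\otimes m}$; the genuinely delicate point is twofold: (a) ensuring that after averaging over $T^n$ one still has a \emph{nonzero} section — this requires understanding how $T^n$ acts on $H^0(M,K_M^{\otimes m})$ and may force one to work with the base locus or with a carefully chosen $m$, possibly invoking that the $T^n$-action extends to a holomorphic $(\mathbb C^*)^n$-action and using the structure of the open orbit; and (b) converting ``$T^n$-invariant plus $\Gamma$-equivariant with a nontrivial homothety factor'' into an honest contradiction on the compact manifold $M$. I expect step (b), reconciling the exponential growth dictated by $c_\gamma\neq 0$ with the compactness of $M$ and the pointwise-norm estimates, to be the crux of the argument.
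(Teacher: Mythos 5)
There is a genuine gap, and it sits exactly where you yourself located the crux: step (b) is not an argument, and as sketched it is false. A continuous (even smooth, strictly positive) function $u$ on the non-compact covering $\widehat M$ satisfying $u\circ\gamma=e^{\lambda c_\gamma}u$ with $c_\gamma\neq 0$ is in no way incompatible with the compactness of $M$: such a $u$ does not descend to $M$, and its supremum over a fundamental domain is simply rescaled on the translated domain, so no maximum principle applies. Concretely, on a Hopf manifold the function $|z|^2$ on $\widehat M=\mC^2\setminus\{0\}$, or the function $e^{\varphi}$ on the minimal covering of any strict lcK manifold, is exactly such a $T^n$-invariant, $\Gamma$-equivariant function, and it vanishes nowhere; so ``$T^n$-invariant plus $\Gamma$-equivariant with a nontrivial homothety factor'' cannot by itself force $\widehat s_0\equiv 0$, and the appeal to a quotient that is ``compact in the relevant directions'' is unsubstantiated. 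There are also two smaller errors upstream: the pullback $\widehat s=\pi^*s$ of a pluricanonical section is $\Gamma$-\emph{invariant} ($\gamma^*\widehat s=\widehat s$); it is only the pointwise norm with respect to $g^K$ that picks up a factor (namely $|\widehat s|^2_{g^K}\circ\gamma=e^{nmc_\gamma}|\widehat s|^2_{g^K}$, not the exponent you wrote), and the $T^n$-average of $s$ in the finite-dimensional representation $H^0(M,K_M^{\otimes m})$ may well be zero, since the invariant weight space can be trivial; the latter could be repaired by working with a weight vector, whose norm is automatically $T^n$-invariant, but that does not rescue step (b).

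The paper's proof takes a different and much more direct route, which is the ingredient your proposal is missing. From the toric hypothesis, Lemma~\ref{inclusion} and Proposition~\ref{isotropic} give commuting holomorphic vector fields $X_1,\dots,X_n$ with $\Omega(X_j,X_k)=0$, so $X_1,\dots,X_n,JX_1,\dots,JX_n$ are independent on the dense principal-orbit set and $\sigma:=Z_1\wedge\cdots\wedge Z_n$, with $Z_j:=X_j-iJX_j$, is a nontrivial holomorphic section of the \emph{anti}canonical bundle. If some $K_M^{\otimes k}$ had a nontrivial holomorphic section $\alpha$, then $\alpha(\sigma^{\otimes k})$ would be a nonzero constant, forcing $\sigma$ to be nowhere vanishing; the dual coframe $\omega_j$ then consists of closed holomorphic $1$-forms, $i\sum_j\omega_j\wedge\ol\omega_j$ is a K\"ahler form on $(M,J)$, and Vaisman's theorem \cite{v} contradicts strictness of the lcK structure. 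In other words, the contradiction is obtained on $M$ itself by pairing the hypothetical pluricanonical section with the canonically constructed anticanonical section coming from the torus action, rather than by a growth argument on the covering; if you want to salvage your approach, this pairing is the missing idea.
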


\begin{proof}
Let $(M^{2n},J,[g])$ be a compact toric strict lcK manifold.
An effective action of the torus $T^n$ has $n$-dimensional principal orbits, and their union is a dense open set $M_0$ in $M$ by the principal orbit theorem (see \emph{e.g.} \cite[Theorem~2.8.5]{dk}). Consequently, there exist real holomorphic vector fields $X_1,\ldots,X_n$ which mutually commute and such that $X_1,\ldots,X_n$ are linearly independent on $M_0$. Since $X_1,\ldots,X_n$ are twisted Hamiltonian, Lemma~\ref{inclusion} and Proposition~\ref{isotropic} show that $\Omega(X_j,X_k)=0$ for every $j,k\le n$. Consequently $X_1,\ldots,X_n,JX_1,\ldots,JX_n$ are linearly independent on $M_0$. Moreover, these $2n$ vector fields mutually commute, since $X_1,\ldots,X_n$ are holomorphic and mutually commute.
We thus obtain
holomorphic sections $Z_j:=X_j-iJX_j$ of $T^{1,0}M$  satisfying $[Z_j,Z_k]=[Z_j,\bar Z_k]=0$ for every $j,k\in\{1,\ldots,n\}$ and such that $Z_1,\ldots,Z_n$ are linearly independent on $M_0$. The anti-canonical bundle $(K_M)^*\simeq \Lambda^n(T^{1,0}M)$ thus has a non-trivial holomorphic section $\sigma:=Z_1\wedge\ldots \wedge Z_n$. 

Assume, for a contradiction, that some positive power $(K_M)^{\otimes k}$ of the canonical bundle has a non-trivial holomorphic section $\alpha$. Then $\alpha(\sigma^{\otimes k})$ is a non-trivial holomorphic function on $M$, thus it is a non-zero constant. Consequently, $\sigma$ is nowhere vanishing, so $\{Z_j\}_{1\leq j\leq n}$ is a  basis of $T^{1,0}M$ at each point of $M$. We denote by $\omega_j\in\Omega^{1,0} M$ the dual basis and claim that $\omega_j$ are closed holomorphic 1-forms. Indeed, since $\omega_i(Z_j)=\delta_{ij}$ and $\omega_i(\bar Z_j)=0$, we obtain for all $i,j,k$:
$$\di\omega_i(Z_j,Z_k)=Z_j(\omega_i(Z_k))-Z_k(\omega_i(Z_j))-\omega_i([Z_j,Z_k])=0,$$
$$\di\omega_i(Z_j,\bar Z_k)=Z_j(\omega_i(\bar Z_k))-\bar Z_k(\omega_i(Z_j))-\omega_i([Z_j,\bar Z_k])=0.$$
Define now the real $(1,1)$-form
$\Omega:=i\sum_{j=1}^n \omega_j\wedge \ol\omega_j$. From the above we have that $\Omega$ is closed. Moreover, the symmetric bilinear form $\Omega(\cdot,J\cdot)$ is positive definite, so $\Omega$ is a K\"ahler form on $M$ compatible with $J$.
By a result of I.~Vaisman, \cite[Thm. 2.1]{v},  the lcK structure is then globally conformally K\"ahler, contradicting the assumption that the lcK structure is strict.

This contradiction shows that the positive tensor powers of $K_M$ have no holomorphic sections, hence $\kappa(M)=-\infty$. 
\end{proof}

\section{Toric lcK surfaces}
 
Using the Kodaira classification of compact complex non-K\"ahlerian surfaces and Theorem~\ref{kod}, we will now describe all compact complex surfaces carrying a toric strict lcK structure. Let us first recall the definition of Hopf surfaces.

\begin{definition}\label{def hopf}
A primary Hopf surface is a complex surface  $(\mC^2\setminus \{0\})/\Gamma$, where 
$\Gamma$ is the group generated by
$$\gamma(z_1,z_2):=( \beta z_1,\alpha z_2+\lambda z_1^m),$$
with $\alpha,\beta,\lambda \in\mC$, satisfying $0<|\alpha|,|\beta|<1$, $\lambda(\alpha-\beta^m)=0$ and $m\in\mN\setminus\{0\}$. The following two cases are distinguished: 
\begin{enumerate}[label=$(\roman*)$]
\item The  diagonal primary Hopf surfaces, when $\lambda=0$.
\item The  non-diagonal primary Hopf surfaces, when $\lambda\neq 0$ (and thus $\alpha=\beta^m$).
\end{enumerate} 
A secondary Hopf surface is a finite quotient of a non-diagonal primary Hopf surface.
\end{definition}

We will need in the sequel the following standard result which relates holomorphic vector fields on a complex manifold and on its blow-up at some point (see for instance \cite{chk}):
\begin{lemma}\label{blowup} 
 Let $M$ be a compact complex manifold. The holomorphic vector fields on the blow-up of $M$ at a point $p$
 are exactly the lifts of holomorphic vector fields on $M$ that vanish at $p$.
\end{lemma}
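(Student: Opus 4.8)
The plan is to argue locally near the blown-up point $p$, since away from $p$ the blow-up map $\pi\colon\widetilde M\to M$ is a biholomorphism and every holomorphic vector field on $\widetilde M\setminus\pi^{-1}(p)$ is transported isomorphically to a holomorphic vector field on $M\setminus\{p\}$. The issue is therefore entirely about what happens across the exceptional divisor $E=\pi^{-1}(p)\cong\mathbb{P}^{n-1}$. First I would recall the local model: choosing holomorphic coordinates $(z_1,\dots,z_n)$ centered at $p$, the blow-up is covered by charts with coordinates, say, $(w_1,\dots,w_n)$ in which $z_1=w_1$, $z_j=w_1w_j$ for $j\ge 2$ (and the analogous charts obtained by permuting the role of the coordinate that is not divided). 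One checks directly in these coordinates that $\pi$ is proper with connected fibers, $\pi(E)=\{p\}$, and $\mathcal{O}_{\widetilde M}=\pi_*\mathcal{O}_{\widetilde M}=\mathcal{O}_M$ (every holomorphic function on a neighborhood of $E$ descends, by Hartogs-type extension or simply because $E$ has codimension one and $\widetilde M\setminus E\cong M\setminus\{p\}$ near $p$, which has finite distance).

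Next I would treat the two inclusions. For the easy direction: if $X$ is a holomorphic vector field on $M$ vanishing at $p$, I claim its restriction to $M\setminus\{p\}$, transported via $\pi$ to $\widetilde M\setminus E$, extends holomorphically across $E$. This is a local computation in the above coordinates: writing $X=\sum_j a_j(z)\partial_{z_j}$ with $a_j(0)=0$, one substitutes $z_1=w_1$, $z_j=w_1w_j$ and checks that the pushed-forward field $\pi_*^{-1}X$ has coefficients that are holomorphic in the $w$-coordinates — the vanishing $a_j(0)=0$ is exactly what cancels the $1/w_1$ that would otherwise appear when expressing $\partial_{z_1}$ and $\partial_{z_j}$ in terms of $\partial_{w_1},\partial_{w_j}$. (Concretely $\partial_{z_j}=\frac1{w_1}\partial_{w_j}$ for $j\ge2$, and $a_j$ vanishing to order $\ge1$ supplies the compensating factor; a similar but slightly more careful bookkeeping handles $\partial_{z_1}$.) Hence $X$ lifts to a holomorphic vector field $\widetilde X$ on $\widetilde M$.

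For the reverse inclusion, let $\widetilde X$ be a holomorphic vector field on $\widetilde M$. Its restriction to $\widetilde M\setminus E\cong M\setminus\{p\}$ is a holomorphic vector field $X$ on $M\setminus\{p\}$; applying the extension property of holomorphic vector fields across a point of codimension $n\ge 2$ (or, for $n=1$ where there is nothing to blow up in a nontrivial sense, the statement is vacuous, so we may assume $n\ge2$), $X$ extends to a holomorphic vector field on all of $M$, still denoted $X$, and by construction $\pi_*\widetilde X=X$ on the open dense set, hence everywhere. It remains to show $X(p)=0$. Suppose not; then in local coordinates $X=\sum_j a_j(z)\partial_{z_j}$ with $(a_1(0),\dots,a_n(0))\ne 0$, and after a linear change of coordinates we may assume $a_1(0)\ne0$. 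Pulling back to the $w$-chart where $z_1=w_1,\ z_j=w_1w_j$, the $\partial_{w_j}$-component of $\pi_*^{-1}X$ contains the term $\tfrac1{w_1}a_j(z) - \tfrac{w_j}{w_1}a_1(z)$ for $j\ge2$, whose numerator at $w_1=0$ equals $a_j(0)-w_j a_1(0)$, which is a nonconstant function of $w_j$ on $E$ and in particular does not vanish identically; thus this component has a genuine pole along $E$, contradicting holomorphy of $\widetilde X$. Therefore $X(p)=0$, completing the proof.

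The main obstacle is bookkeeping in the blow-up charts: one must carefully track how $\partial_{z_j}$ transforms and verify that the condition "$X$ vanishes at $p$" is precisely equivalent to "no pole along $E$", in all the standard charts covering $\widetilde M$. Since this is a classical local computation (and is exactly the content of the cited reference \cite{chk}), I would present the one representative chart in detail and remark that the other charts are handled identically by symmetry.
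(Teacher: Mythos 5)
Your proof is correct, but there is nothing in the paper to compare it against: Lemma~\ref{blowup} is stated as a standard fact and simply referred to \cite{chk}, with no proof given. Your argument is the classical chart computation and it checks out. In the chart $z_1=w_1$, $z_j=w_1w_j$ one has $\partial_{z_1}=\partial_{w_1}-\sum_{j\ge2}\tfrac{w_j}{w_1}\partial_{w_j}$ and $\partial_{z_j}=\tfrac{1}{w_1}\partial_{w_j}$, so the transported field $\sum_j a_j\partial_{z_j}$ has components $a_1(z(w))$ and $\tfrac{1}{w_1}\bigl(a_j(z(w))-w_j\,a_1(z(w))\bigr)$; since any $a_j$ with $a_j(0)=0$ pulls back to a multiple of $w_1$, holomorphy across $E$ in all such charts is equivalent to $a_j(0)-w_j a_1(0)\equiv0$ on $\{w_1=0\}$, i.e.\ to $X(p)=0$, which gives both inclusions exactly as you argue (and the Hartogs extension of the pushed-forward field across the point $p$, valid for $n\ge2$, correctly supplies the candidate $X$ in the reverse direction). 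Two minor points to clean up: the sheaf-theoretic aside in your first paragraph, written as $\mathcal{O}_{\widetilde M}=\pi_*\mathcal{O}_{\widetilde M}=\mathcal{O}_M$, compares sheaves on different spaces as stated and is never used, so it should be dropped or restated as $\pi_*\mathcal{O}_{\widetilde M}=\mathcal{O}_M$; and the parenthetical about $n=1$ is not quite accurate --- the blow-up of a curve at a point is a biholomorphism, so the statement there would be false rather than vacuous --- but this is harmless, since blowing up a point is only meaningful for $n\ge2$ and the paper applies the lemma exclusively to surfaces.
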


The main result of this section is the following:

\begin{theorem}\label{mainthm}
The only compact complex surfaces admitting a toric strict lcK structure are the diagonal Hopf surfaces.
\end{theorem}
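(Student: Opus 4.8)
The plan is to combine Theorem~\ref{kod} with the Kodaira--Enriques classification of compact complex surfaces, and then to analyze the torus action directly on the short list of surfaces that survive. First I would observe that a toric strict lcK surface $(M,J,[g])$ cannot be Kähler: a strict lcK metric has non-exact Lee form, so $M$ is non-Kähler, hence $\mathrm{b}_1(M)$ is odd. By Theorem~\ref{kod}, $\kappa(M)=-\infty$, so $M$ lies in class VII of the Kodaira classification, \emph{i.e.} $\mathrm{b}_1(M)=1$ and $\kappa(M)=-\infty$. The minimal models of class VII surfaces are the minimal class $\mathrm{VII}_0$ surfaces: Hopf surfaces, Inoue surfaces, and the (hypothetical/known) minimal class $\mathrm{VII}_0^+$ surfaces with $\mathrm{b}_2>0$ (Kato surfaces, Enoki surfaces, etc.). A general $M$ is obtained from a minimal model $M_{\min}$ by a finite sequence of blow-ups.

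Next I would use the $T^2$-action. Since $T^2$ acts effectively and holomorphically on $M$ with $\mathrm{b}_1(M)=1$, the algebra $\mathfrak{aut}(M,J)$ contains a $2$-dimensional abelian subalgebra spanned by commuting real holomorphic vector fields $X_1,X_2$; equivalently, $(M,J)$ admits $2$ linearly independent (on a dense open set) holomorphic vector fields. This already rules out the Inoue surfaces and most minimal $\mathrm{VII}_0^+$ surfaces, whose holomorphic vector field algebras are at most $1$-dimensional (for Inoue surfaces $\dim \mathfrak{aut}=1$; Kato/Enoki surfaces with $\mathrm{b}_2>0$ likewise do not carry a $2$-torus of biholomorphisms). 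Moreover, blowing up decreases the symmetry: by Lemma~\ref{blowup}, the holomorphic vector fields on a blow-up $\widetilde{M}\to M$ at $p$ are the lifts of those on $M$ vanishing at $p$; an effective $T^2$-action on $\widetilde{M}$ would descend to an effective $T^2$-action on $M_{\min}$ fixing the images of the exceptional locus, which is impossible unless no blow-up occurred (a non-trivial $T^2$ on a surface has only isolated fixed points and cannot simultaneously fix the finitely many blow-up centers while remaining effective — one checks the dimension count). Hence $M=M_{\min}$ is a minimal class VII surface carrying an effective holomorphic $T^2$-action, which forces $M$ to be a Hopf surface.

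It remains to exclude the non-diagonal (and hence the secondary) Hopf surfaces. For a primary Hopf surface $M_{\alpha,\beta,\lambda}$ with $\lambda\neq 0$ (so $\alpha=\beta^m$), the group of holomorphic automorphisms is strictly smaller than in the diagonal case: one computes that $\mathfrak{aut}(M_{\alpha,\beta,\lambda})$ is at most $1$-dimensional when $m\geq 2$, and even for $m=1$ the linear vector fields commuting with $\gamma$ do not span a $2$-torus acting effectively — concretely, the vector field $z_1\partial_{z_1}+z_2\partial_{z_2}$ generating the radial flow has non-compact orbits, and the remaining commuting holomorphic fields form at most a $1$-dimensional torus. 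Thus no non-diagonal primary Hopf surface admits an effective holomorphic $T^2$-action, and a secondary Hopf surface, being a finite quotient of such, inherits this obstruction (an effective $T^2$ on the quotient lifts to an effective $T^2$ on the non-diagonal primary cover). Finally, for the diagonal Hopf surfaces $M=(\mC^2\setminus\{0\})/\langle(z_1,z_2)\mapsto(\beta z_1,\alpha z_2)\rangle$, the two commuting holomorphic flows $e^{i t}(z_1,z_2)=(e^{it}z_1,z_2)$ and $(z_1,e^{it}z_2)$ descend to an effective holomorphic $T^2$-action, and by \cite{go} these surfaces carry lcK (in fact Vaisman) metrics; one checks using Lemma~\ref{theta0} (or directly) that this $T^2$-action is twisted Hamiltonian, so the structure is toric.

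The main obstacle is the second paragraph: ruling out blow-ups and the minimal $\mathrm{VII}_0^+$ surfaces rigorously requires knowing the structure of $\mathfrak{aut}$ for all class VII surfaces — this is where one leans on the Kodaira classification together with Lemma~\ref{blowup} and a careful fixed-point/dimension analysis of the torus action. The exclusion of non-diagonal Hopf surfaces (third paragraph) is a concrete but delicate explicit computation of vector fields commuting with the generator $\gamma$.
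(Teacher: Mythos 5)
Your overall skeleton (Theorem~\ref{kod} plus the Kodaira classification of class VII surfaces, then a case analysis of the holomorphic symmetries) is the same as the paper's, but the decisive step — excluding the non-diagonal Hopf surfaces — is wrong as you state it. A non-diagonal primary Hopf surface \emph{does} admit an effective holomorphic $T^2$-action: the space of $\Gamma$-invariant holomorphic vector fields on $\mC^2\setminus\{0\}$ is spanned by $X_1=z_1\partial_{z_1}+mz_2\partial_{z_2}$ and $X_2=z_1^m\partial_{z_2}$ (so it is $2$-dimensional for every $m\ge 1$, not $1$-dimensional for $m\ge2$), and although on the covering only the imaginary multiples of $X_1$ have relatively compact orbits, on the quotient $M=(\mC^2\setminus\{0\})/\langle\gamma\rangle$ the ``radial'' direction closes up: the group generated by $(z_1,z_2)\mapsto(az_1,a^mz_2)$ and $(z_1,z_2)\mapsto(z_1,z_2+cz_1^m)$ is abelian, contains $\gamma$, and its image in $\mathrm{Aut}(M,J)$ contains a compact $2$-torus. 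The paper even points this out: after averaging one gets an lcK metric with $T^2\subseteq\mathrm{Aut}(M,J,[g])$ on such a surface. So no argument that only counts holomorphic vector fields or denies the existence of a $T^2$-action can succeed here. What actually kills these surfaces is the \emph{twisted Hamiltonian} hypothesis: by Lemma~\ref{inclusion} a toric action satisfies $T^2\subseteq\mathrm{Aut}_s(M,J,[g])$, by Proposition~\ref{prop lcklift}$(ii)$ the action then lifts to the minimal covering, which (since $\pi_1(M)\cong\mZ$ and the structure is strict) is $\mC^2\setminus\{0\}$ itself, and the lifted fundamental vector fields would be $\Gamma$-invariant holomorphic fields with relatively compact orbits — of which there is only a one-parameter family (multiples of $iX_1$), a contradiction. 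This use of the lifting machinery is the essential content missing from your proposal; the same mechanism is also what the paper uses (via Lemma~\ref{invincl}) to verify that the Gauduchon--Ornea metric on a diagonal Hopf surface really is toric, a point you only gesture at.

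Your treatment of blow-ups is also based on a false principle: a nontrivial $T^2$-action on a surface can perfectly well fix the blow-up centers and remain effective after blowing up (blow-ups of $\mathbb{CP}^2$ at torus-fixed points are toric surfaces), so ``descend to $M_{\min}$ and get a contradiction from the fixed points'' does not work. The issue is real because diagonal Hopf surfaces carry up to four independent holomorphic vector fields, so their blow-ups are not excluded by any crude dimension count. The paper instead shows, case by case in the four possible vector-field algebras of $M_{\alpha,\beta}$ (and by the compact-orbit argument in the non-diagonal case), that no two linearly independent \emph{commuting} holomorphic vector fields vanish at a common point; Lemma~\ref{blowup} then implies that no blow-up carries two independent commuting holomorphic fields, hence no effective $T^2$-action. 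For Inoue and class $VII_0^+$ surfaces your argument agrees with the paper's and is fine, but the two gaps above (non-diagonal Hopf surfaces and blow-ups of Hopf surfaces) are exactly the delicate parts of the theorem and are not bridged by your proposal.
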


\begin{proof}
Let $(M, J)$ be a compact complex surface carrying a toric strict lcK structure. By Theorem~\ref{kod}, $\kappa(M)=-\infty$. The Kodaira classification of non-K\"ahlerian
 compact complex surfaces shows that the only possible examples with Kodaira dimension $-\infty$ are the Inoue surfaces, the Hopf surfaces, the surfaces in the class $VII_0^+$ ({\em i.e.} with $b_2>0$), and blow-ups thereof (see \emph{e.g.} \cite{k1} or \cite[Proposition 1]{b}).
 
The space of holomorphic vector fields is at most one-dimensional on Inoue surfaces (see \cite[Prop. 2 $(ii)$, Prop. 3 $(ii)$ and Prop. 5]{i} or \cite[Prop. 12]{b}) and on surfaces in the class $VII_0^+$ (see {\em e.g.} \cite[Remark 0.2]{dot}). Consequently, these surfaces cannot admit an effective holomorphic $T^2$-action. By Lemma~\ref{blowup}, the same holds for each of their blow-ups.
 
We now consider the two types of Hopf surfaces.

\noindent{\bf 1) Non-diagonal Hopf surfaces and blow-ups thereof.} 
We claim that these surfaces do not admit a toric lcK structure.
  
Since the secondary Hopf surfaces are finite quotients of non-diagonal primary Hopf surfaces, it suffices to show that the latter ones do not admit such toric lcK structures. Let $M:=(\mC^2\setminus\{0\})/\Gamma$ be a non-diagonal primary Hopf surface, where $\Gamma$ is generated by $\gamma(z_1,z_2):=( \beta z_1,\beta^m z_2+\lambda z_1^m)$, according to Definition \ref{def hopf}. We first determine the space of holomorphic vector fields on $M$. These correspond to the holomorphic vector field on $\mC^2\setminus\{0\}$ invariant under the action of $\Gamma$.
  
 \noindent  {\bf Claim.} The space of $\Gamma$-invariant holomorphic vector fields on $\mC^2\setminus\{0\}$ is generated by 
$$X_1:=z_1\frac{\partial}{\partial z_1}+m z_2\frac{\partial}{\partial z_2},\quad X_2:=z_1^m\frac{\partial}{\partial z_2}.$$

\noindent {\it Proof of the Claim.} Let $X=u\frac{\partial}{\partial z_1}+v\frac{\partial}{\partial z_2}$ be a $\Gamma$-invariant holomorphic vector field on $\mC^2\setminus\{0\}$, where $u$ and $v$ are two holomorphic functions on $\mC^2\setminus\{0\}$. Since the singularity is isolated, $u$ and $v$ extend holomorphically to the whole complex plane $\mC^2$. The $\Gamma$-invariance of $X$ reads
\begin{gather}
u\circ \gamma (z)=\beta u(z),\label{eq u}\\
v\circ \gamma(z)=m\lambda z_1^{m-1}u(z)+\beta^mv(z),\label{eq v}
\end{gather}
for all $z\in\mC^2$. On the other hand, by induction on $n\in\mN$, we establish that  
\begin{equation}\label{gamman}\gamma^n(z)=(\beta^n z_1,\beta^{mn}z_2+n\lambda \beta^{m(n-1)}z_1^m).\end{equation}

In the sequel, we use the fact that for every $\beta\in\mC$ with $0<|\beta|<1$, $k\in\mN$ and $n\in\mN\setminus\{0\}$, an entire function $f$, which satisfies $f(\beta^n z)=\beta^kf(z)$ for any $z\in\mC$, is either a monomial of degree $\frac{k}{n}$, if $n$ divides $k$, or identically zero otherwise.

For $z_1=0$, \eqref{eq u} reads $u(0,\beta^mz_2)=\beta u(0,z_2)$. By the above remark, the function $z_2\mapsto u(0,z_2)$ is linear if $m=1$, or identically zero otherwise. 
Hence, there exists $c_m\in\mC$ (which vanishes if $m\ne 1$), such that $z_1=0$  is a zero of the function $z_1\mapsto u(z_1,z_2)-c_mz_2$. Thus  $\ti u(z):=\frac{u(z)-c_mz_2}{z_1}$ is an entire function and by \eqref{eq u} it satisfies $\ti u\circ \gamma=\ti u-\frac{c_m\lambda}{\beta}$. Therefore, $\ti u\circ \gamma^n(z)=\ti u(z)-n\frac{c_m\lambda}{\beta}$ for every $z\in\mC^2\setminus\{0\}$. Taking the limit in this identity for $n\to\infty$, and using the fact that $\gamma^n(z)$ tends to $0$ for every $z$ by \eqref{gamman}, it follows that $c_m=0$ and $\ti u$ is constant. We deduce that $u(z)=cz_1$ for some constant $c\in\mC$. Substituting in \eqref{eq v} and iterating, we obtain by immediate induction
\begin{equation} \label{eq vn}
v\circ\gamma^n(z)=\beta^{nm}v(z)+nmc\lambda\beta^{m(n-1)}z_1^m, \quad \forall n\in\mN.
\end{equation}
In particular, for $n=1$ and $z_1=0$, we readily obtain $v(0,\beta^m z_2)=\beta^{m}v(0,z_2)$, for all $z_2\in\mC$.  So, there exists $c'\in\mC$, such that $v(0,z_2)=c'z_2$, hence the entire function $z_1\mapsto v(z_1,z_2)-c'z_2$ vanishes at $z_1=0$. Therefore, either $v(z)=c'z_2$, for any $z\in\mC^2$, or there exists $k\in\mN\setminus\{0\}$ and an entire function $\ti v$, such that $v(z)-c'z_2=z_1^k\ti v(z)$ and $\ti v(0,a)\neq 0$, for some $a\in\mC$. In the first case, \eqref{eq v} implies that $c'=mc$. We now consider the second case.
By~\eqref{eq vn}, the function $\ti v$ satisfies
$$\ti v\circ \gamma^n(z) =\beta^{(m-k)n}(\ti v+n\lambda\beta^{-m}(mc-c')z_1^{m-k}).$$

We now let $n\to\infty$ and distinguish the following cases:
\begin{itemize}
	\item if $k>m$, then $\ti v\equiv 0$ and $c'=mc$.
	\item if $k<m$, then the function $\ti v$ vanishes at $(0,z_2)$, for all $z_2\in\mC$. This contradicts the definition of $k$ and $\ti v$.
	\item if $k=m$, then $c'=mc$ and $\ti v$ is constant.
\end{itemize}
We conclude that $u(z)=cz_1$ and  $v(z)=mcz_2+c''z_1^m$, for some constants $c,c''\in\mC$.
This proves the claim.  \hfill $\checkmark$

Since the vector fields $X_1$ and $X_2$ commute, the flow of any holomorphic vector field $X:=aX_1+bX_2$, with $a,b\in\mathbb{C}$, is the composition of the flows of $aX_1$ and $bX_2$, which are given by
  \[\varphi_t(z_1, z_2)=(e^{at}z_1, e^{mat}z_2), \quad \psi_t(z_1, z_2)=(z_1, z_2+btz_1^m).\]
Namely, the flow of $X$ is $(\varphi_t\circ\psi_t)(z_1,z_2)=(e^{at}z_1,e^{mat}(z_2+btz_1^m))$.
  Hence, the orbits of $X$ are all relatively compact if and only if $a\in i\mathbb{R}$ and $b=0$,
  \emph{i.e.} $X$ is a multiple of $X_1$. On the other hand, as the fundamental group of $M$ is $\mathbb{Z}$, the minimal covering of any lcK structure 
 on $M$ is equal to its universal covering $\widetilde M=\mathbb{C}^2\setminus\{0\}$.

If $(M,J)$ admits a toric lcK metric $g$, then Lemma~\ref{inclusion} implies that
$T^2\subseteq\mathrm{Aut}_s(M,J,[g])$.
Furthermore, by Proposition~\ref{prop lcklift}, 
the $T^2$-action lifts to $\mathbb{C}^2\setminus\{0\}$. This contradicts the above computation, which shows
  that there do not exist two linearly independent holomorphic $\Gamma$-invariant vector fields on $\mathbb{C}^2\setminus\{0\}$ with relatively compact orbits. We conclude that a non-diagonal primary Hopf surface does not carry 
a toric lcK structure. This argument also shows that the same is true for any of its blow-ups.

Note that any non-diagonal primary Hopf surface $(M,J)$ admits an lcK metric (\emph{cf.} \cite[Proposition 11]{b}), which can be averaged to an lcK metric $g$ by an argument of L.~Ornea and M.~Verbitsky, \cite{ov}, such that $T^2\subseteq\mathrm{Aut}(M,J,[g])$. However, as seen above, $T^2$ is never a subgroup of $\mathrm{Aut}_s(M,J,[g])$
  
\noindent {\bf 2) Diagonal Hopf surfaces.}  Let $M_{\alpha,\beta}:=(\mC^2\setminus\{0\})/\Gamma$ be a diagonal Hopf surface, where $\Gamma$ is generated by $\gamma(z_1,z_2):=( \beta z_1,\alpha z_2)$, according to Definition \ref{def hopf}.

A Vaisman metric on $M_{\alpha,\beta}$ was constructed by P.~Gauduchon and L.~Ornea \cite{go}, as follows: let $\phi_{\alpha,\beta}:\mC^2\setminus\{0\}\longrightarrow \mR$ be the function 
implicitly defined as the unique solution of the  equation
$$|z_1|^2|\beta|^{-2\phi_{\alpha,\beta}(z)}+|z_2|^2|\alpha|^{-2\phi_{\alpha,\beta}(z)}=1,$$
for any $z\in\mC^2\setminus\{0\}$. It can be checked that the function $|\alpha\beta|^{\phi_{\alpha,\beta}}$ is a potential for a K\"ahler metric $g^K:=\di\di^c|\alpha\beta|^{\phi_{\alpha,\beta}}$ on $\mC^2\setminus\{0\}$ and $g:=|\alpha\beta|^{-\phi_{\alpha,\beta}}g^K$ projects to a Vaisman metric on $M_{\alpha,\beta}$ (for details see \cite[Proposition 1]{go}). 

\begin{lemma}\label{rem}
The above defined Vaisman structure on 
$M_{\alpha,\beta}$ is toric. 
\end{lemma}
\begin{proof}
We first notice that $\mC^2\setminus\{0\}$ carries a $T^2$-action given by 
$$T^2\longrightarrow {\rm Diff}(\mC^2\setminus\{0\}), \quad (t_1,t_2)\cdot(z_1,z_2)=(t_1 z_1,t_2z_2),$$ which is effective, holomorphic and $\Gamma$-invariant and hence descents to an effective, holomorphic $T^2$-action on $M_{\alpha,\beta}$. 
The potential $\phi_{\alpha,\beta}$ is smooth and satisfies $\phi_{\alpha,\beta}\circ \gamma=\phi_{\alpha,\beta}+1$, for any $\gamma\in\Gamma$, and is constant along the
$T^2$-orbits, so $T^2$  acts isometrically on $\mC^2\setminus\{0\}$  with respect to $g^K$. As the conformal factor between $g$ and $g^K$ is exactly this potential, it follows that $T^2$ also acts isometrically with respect to $g$ and is a subgroup of $\mathrm{Aut}_s(M_{\alpha,\beta})$. By Lemma~\ref{invincl}, the $T^2$-action is twisted Hamiltonian. 
\end{proof}

In order to prove that a blow-up at a point of $M_{\alpha,\beta}$ does not admit an effective holomorphic $T^2$-action, 
we start by determining the space of holomorphic vector fields on $M_{\alpha,\beta}$. Namely, we have the following

\noindent{\bf Claim.}
The space of $\Gamma$-invariant holomorphic vector fields on $\mC^2\setminus\{0\}$ is generated by 
\begin{enumerate}[label=\alph*)]
\item $\bigl\{z_1\frac{\partial}{\partial z_1}, z_2\frac{\partial}{\partial z_2}, z_2\frac{\partial}{\partial z_1}, z_1\frac{\partial}{\partial z_2}\bigr\}$, if $\alpha=\beta$.
 \item $\bigl\{z_1\frac{\partial}{\partial z_1}, z_2\frac{\partial}{\partial z_2}, z^m_2\frac{\partial}{\partial z_1}\bigr\}$, if $\alpha=\beta^m$ for some integer $m\geq 2$.
   \item $\bigl\{z_1\frac{\partial}{\partial z_1}, z_2\frac{\partial}{\partial z_2}, z^m_1\frac{\partial}{\partial z_2}\bigr\}$, if $ \beta=\alpha^m$ for some integer $m\geq 2$. 
  \item $\bigl\{z_1\frac{\partial}{\partial z_1}, z_2\frac{\partial}{\partial z_2}\bigr\}$, if $\alpha\neq\beta^{k}$ and $\beta\neq\alpha^{k}$ for any $ k\in \mN$.
   
  \end{enumerate}

\noindent{\it Proof of the Claim.}
Like before, any holomorphic vector field on $\mC^2\setminus\{0\}$ can be extended to $\mC^2$, since the singularity is isolated. Let $X:=u\frac{\partial}{\partial z_1}+v\frac{\partial}{\partial z_2}$ be a holomorphic vector field,  where $u$ and $v$ are two holomorphic functions on $\mC^2$. The vector field $X$ is $\Gamma$-invariant if and only if $u(\alpha z_1, \beta z_2)=\alpha u(z_1,z_2)$ and $v(\alpha z_1, \beta z_2)=\beta v(z_1,z_2)$. Equivalently, $a_{jk}(\alpha-\alpha^j\beta^k)=0$ and $b_{jk}(\beta-\alpha^j\beta^k)=0$ for all $j,k\in\mN$, where $a_{jk}$ and $b_{jk}$ are the coefficients of the power series of $u$ and $v$ respectively. Since $0<|\alpha|,|\beta|<1$, it turns out that in each of the four possible cases, $X$ is a linear combination of the vector fields as stated in the claim. \hfill $\checkmark$
  
The next step is to show that there are no two linearly independent commuting holomorphic vector fields which vanish at the same point. Note that it suffices to prove this for the cases a) and b) in the above claim. 

\noindent {\bf Case a)} 
Assume that $\alpha=\beta$ and let $X,Y\in {\rm span}\bigl\{z_1\frac{\partial}{\partial z_1}, z_2\frac{\partial}{\partial z_2}, z_2\frac{\partial}{\partial z_1}, z_1\frac{\partial}{\partial z_2}\bigr\}$, such that there exists $w\in \mC^2\setminus\{0\}$ with $X_w=Y_w=0$. Hence, there exist $a\in\mC^2\setminus\{0\}$ and $c_j\in\mC$, such that 
$X_z=(a_1z_1+a_2z_2)\left(c_1\frac{\partial}{\partial z_1}+c_2\frac{\partial}{\partial z_2}\right)$ and $Y_z=(a_1z_1+a_2z_2)\left(c_3\frac{\partial}{\partial z_1}+c_4\frac{\partial}{\partial z_2}\right)$. Therefore, the Lie bracket of $X$ and $Y$ is given by
\begin{equation*}
\begin{split}
[X,Y]_z&=(a_1z_1+a_2z_2)\left( (a_1c_1+a_2c_2)\left(c_3\frac{\partial}{\partial z_1}+c_4\frac{\partial}{\partial z_2}\right)-(a_1c_3+a_2c_4)\left(c_1\frac{\partial}{\partial z_1}+c_2\frac{\partial}{\partial z_2}\right)\right)\\
& =(a_1z_1+a_2z_2)(c_2c_3-c_1c_4)\left(a_2 \frac{\partial}{\partial z_1} -a_1\frac{\partial}{\partial z_2}\right).
\end{split}
\end{equation*}
Thus, if $X$ and $Y$ commute, then $c_2c_3=c_1c_4$, hence $X$ and $Y$ are linearly dependent.
 
 \noindent {\bf Case b)}  Assume that $\alpha=\beta^m$ for some integer $m\geq 2$  and let $X,Y\in {\rm span}\bigl\{z_1\frac{\partial}{\partial z_1}, z_2\frac{\partial}{\partial z_2}, z_2^n\frac{\partial}{\partial z_1}\bigr\}$, such that there exists $w\in \mC^2\setminus\{0\}$ with $X_w=Y_w=0$. This readily shows that there exist $a,b\in\mC^2\setminus\{0\}$ such that either
 \begin{gather*}
 X_z=a_1z_2^m\frac{\partial}{\partial z_1}+a_2z_2\frac{\partial}{\partial z_2}, \quad Y_z=b_1z_2^m\frac{\partial}{\partial z_1}+b_2z_2\frac{\partial}{\partial z_2} \quad {\rm or}\\
 X_z=(a_1z_1+a_2z_2^m)\frac{\partial}{\partial z_1}, \quad Y_z=(b_1z_1+b_2z_2^m)\frac{\partial}{\partial z_1}.
 \end{gather*}
Therefore, we compute
$$[X,Y]_z=m(a_2b_1-a_1b_2)z_2^m\frac{\partial}{\partial z_1}, \textrm{ respectively }[X,Y]_z=b_1X_z-a_1Y_z.$$
In both cases, $X$ and $Y$ commute if and only if they are linearly dependent.

By applying Lemma~\ref{blowup}, we conclude that no blow-up of $M_{\alpha,\beta}$ carries two linearly independent commuting holomorphic vector fields, and in particular cannot be toric lcK.
\end{proof}

As an immediate consequence of Theorem~\ref{mainthm} and Lemma~\ref{rem}, we obtain:

\begin{corollary} 
If $(M,J)$ is a compact complex surface admitting a toric lcK structure, then it also admits a toric Vaisman structure.
\end{corollary}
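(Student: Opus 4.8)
The plan is to obtain the Corollary by simply combining the classification in Theorem~\ref{mainthm} with the construction verified in Lemma~\ref{rem}; no new argument is needed. (Recall the standing convention of the paper that ``lcK'' means ``strict lcK''; this is in any case forced here, since a compact complex surface carrying a globally conformally K\"ahler metric is K\"ahler, hence has even first Betti number, while a Vaisman manifold has odd first Betti number by the Remark following Lemma~\ref{lem vais}, so a non-strict toric lcK surface could never be Vaisman.)

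So let $(M,J)$ be a compact complex surface carrying a toric strict lcK structure. First I would apply Theorem~\ref{mainthm} to produce a biholomorphism $F\colon M\to M_{\alpha,\beta}$ onto a diagonal Hopf surface with $0<|\alpha|,|\beta|<1$. Next, on $M_{\alpha,\beta}$ I would take the Vaisman metric $g$ constructed by Gauduchon and Ornea and recalled just before Lemma~\ref{rem}; by that lemma the corresponding Vaisman structure is toric, the acting torus being the descent to $M_{\alpha,\beta}$ of the diagonal action $(t_1,t_2)\cdot(z_1,z_2)=(t_1z_1,t_2z_2)$ of $T^2$ on $\mC^2\setminus\{0\}$. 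Finally I would transport this structure to $(M,J)$ via $F$: the metric $F^*g$ is Vaisman with respect to $J$, and the $T^2$-action conjugated by $F$ is holomorphic, effective and twisted Hamiltonian, since all of these properties are preserved by a biholomorphism (the Lee form, the fundamental $2$-form, the twisted differential $\di^\theta$ and the twisted Hamiltonian functions all pull back compatibly). Hence $(M,J)$ admits a toric Vaisman structure.

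I do not expect any genuine obstacle here: the entire content sits in Theorem~\ref{mainthm} --- which itself rests on the Kodaira dimension computation of Theorem~\ref{kod}, on the Kodaira--Enriques list of non-K\"ahlerian surfaces, and on the case-by-case analysis of $\Gamma$-invariant holomorphic vector fields on Hopf surfaces and their blow-ups --- together with Lemma~\ref{rem}, which checks directly that the Gauduchon--Ornea metric is toric. The Corollary merely records the juxtaposition of these two results.
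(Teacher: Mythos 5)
Your proposal is correct and is exactly the paper's (implicit) argument: the corollary is stated there as an immediate consequence of Theorem~\ref{mainthm} and Lemma~\ref{rem}, i.e.\ the surface must be a diagonal Hopf surface, which carries the toric Gauduchon--Ornea Vaisman structure, transported back via the biholomorphism. Your additional remarks on strictness and on the invariance of the toric Vaisman conditions under biholomorphism are accurate but not needed beyond what the paper records.
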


It would be interesting to know whether this result holds for all compact complex manifolds. However, the methods of this paper, which are based on the Kodaira classification of compact complex surfaces, do not extend to higher dimensions.


\begin{thebibliography}{5}

\bibitem{b} F.~A.~Belgun, {\sl On the metric structure of non-K\"ahler complex surfaces}, Math.\ Ann.\ {\bf 317} (2000), no.\ 1, 1--40.


\bibitem{bpv} W.~Barth, C.~Peters, A.~Van de Ven, {\it Compact complex surfaces}, Ergebnisse der Mathematik und ihrer Grenzgebiete, 3. Folge, Band {\bf 4}, Springer Verlag, Berlin, 1984.

\bibitem{Besse2008}
A.~L. Besse, \emph{Einstein manifolds}, Classics in Mathematics,
  Springer-Verlag, Berlin, 2008.

\bibitem{chk} J. Carrell, A. Howard, C. Kosniowski, {\sl Holomorphic vector fields on complex surfaces}, {Math. Ann.} {\bf 204} (1973), 73--81.

\bibitem{dot} G. Dloussky, K. Oeljeklaus, M. Toma, {\sl Surfaces de la classe VII admettant un champ de vecteurs, II},
Comment. Math. Helv. {\bf 76} (2001), 640--664.      

\bibitem{dk} J.J.~Duistermaat, J.A.C.~Kolk, {\it Lie Groups}, Universitext, Springer Verlag, Berlin, 2000.  

\bibitem{g} P.\ Gauduchon, {\sl La
$1$-forme de torsion d'une vari\'et\'e hermitienne compacte}, Math.\ Ann.\ {\bf 267} (1984), 495--518.

\bibitem{go} P.\ Gauduchon, L.\ Ornea, {\sl Locally conformally K\"ahler metrics on Hopf surfaces}, Ann.\ Inst.\ Fourier {\bf 48} (1998), 1107--1127.

\bibitem{i} M.\ Inoue, {\sl On Surfaces of Class VII$_0$}, Invent. Math. {\bf 24} (1974), 269--310.

\bibitem{ks} T.~Kashiwada, S.~Sato, 
{\sl On harmonic forms in compact locally conformal K\"ahler manifolds with the parallel Lee form},
Ann.\ Fac.\ Sci.\ Univ.\ Nat.\ Za\"ire (Kinshasa) {\bf 6} (1980), no. 1-2, 17--29. 

\bibitem{k1}K.~Kodaira, {\sl On compact complex analytic spaces I}, Ann.\,Math. {\bf 71} (1960), 111--152; II, ibid. {\bf 77} (1963), 563–-626; III, ibid. {\bf 78} (1963), 1–-40.

\bibitem{k2} K.~Kodaira, {\sl On the structure of compact complex analytic spaces I}, Am. J. Math. {\bf 86} (1964), 751--798; II, ibid. {\bf 88} (1966), 682--721; III, ibid. {\bf 90} (1969), 55--83; IV, ibid. {\bf 90} (1969), 1048–-1066.

\bibitem{lerman}
E.~Lerman, {\sl Contact toric manifolds}, J. Symplectic Geom. \textbf{1}
  (2003), 785--828.
  
\bibitem{lerman04} E.~Lerman, {\sl Homotopy groups of K-contact toric manifolds},  Trans. of the Amer. Math. Soc. {\bf 356} (2004), no. 10, 4075--4083.

  

\bibitem{p} M.\ Pilca, {\sl Toric Vaisman manifolds}, J. Geom. Phys. {\bf 107} (2016), 149--161.

\bibitem{ov} L.~Ornea, M.~Verbitsky, {\sl Automorphisms of Locally Conformally K\"ahler Manifolds}, Int. Math. Res. Not. {\bf 2012} (2012), no.\ 4,  894--903.

\bibitem{v} I.\ Vaisman, {\sl On locally and globally conformal K\"ahler manifolds},  Trans.\ Amer.\ Math.\ Soc.\ {\bf 262} (1980), no.\ 2, 533--542.

\end{thebibliography}
\end{document}